\def\@settitle{%
  \vspace*{-20pt}
  \begin{flushleft}%
    \baselineskip14\p@\relax
    \normalfont\bfseries\LARGE
    \@title
  \end{flushleft}%
}
\def\@setauthors{%
  \begingroup
  \def\thanks{\protect\thanks@warning}%
  \trivlist
  \large \@topsep30\p@\relax
  \advance\@topsep by -\baselineskip
  \item\relax
  \author@andify\authors
  \def\\{\protect\linebreak}%
  \authors
  \ifx\@empty\contribs
  \else
    ,\penalty-3 \space \@setcontribs
    \@closetoccontribs
  \fi
  \normalfont
  \@setaddresses
  \endtrivlist
  \endgroup
}
\def\@setaddresses{\par
  \nobreak \begingroup\raggedright
  \small
  \def\author##1{\nobreak\addvspace\smallskipamount}%
  \def\\{\unskip, \ignorespaces}%
  \interlinepenalty\@M
  \def\address##1##2{\begingroup
    \par\addvspace\bigskipamount\noindent
    \@ifnotempty{##1}{(\ignorespaces##1\unskip) }%
    {\ignorespaces##2}\par\endgroup}%
  \def\curraddr##1##2{\begingroup
    \@ifnotempty{##2}{\nobreak\noindent\curraddrname
      \@ifnotempty{##1}{, \ignorespaces##1\unskip}\/:\space
      ##2\par}\endgroup}%
  \def\email##1##2{\begingroup
    \@ifnotempty{##2}{\smallskip\nobreak\noindent E-mail address%
      \@ifnotempty{##1}{, \ignorespaces##1\unskip}\/:\space
      \ttfamily##2\par}\endgroup}%
  \def\urladdr##1##2{\begingroup
    \def~{\char`\~}%
    \@ifnotempty{##2}{\nobreak\noindent\urladdrname
      \@ifnotempty{##1}{, \ignorespaces##1\unskip}\/:\space
      \ttfamily##2\par}\endgroup}%
  \addresses
  \endgroup
  \global\let\addresses=\@empty
}
\def\@setabstracta{%
    \ifvoid\abstractbox
  \else
    \skip@25\p@ \advance\skip@-\lastskip
    \advance\skip@-\baselineskip \vskip\skip@
    \box\abstractbox
    \prevdepth\z@ 
    \vskip-10pt
  \fi
}
\renewenvironment{abstract}{%
  \ifx\maketitle\relax
    \ClassWarning{\@classname}{Abstract should precede
      \protect\maketitle\space in AMS document classes; reported}%
  \fi
  \global\setbox\abstractbox=\vtop \bgroup
    \normalfont\small
    \list{}{\labelwidth\z@
      \leftmargin0pc \rightmargin\leftmargin
      \listparindent\normalparindent \itemindent\z@
      \parsep\z@ \@plus\p@
      
    }%
    \item[\hskip\labelsep\bfseries\abstractname.]%
}{%
  \endlist\egroup
  \ifx\@setabstract\relax \@setabstracta \fi
}
\def\section{\@startsection{section}{1}%
  \z@{-1.2\linespacing\@plus-.5\linespacing}{.8\linespacing}%
  {\normalfont\bfseries\large}}
\def\subsection{\@startsection{subsection}{2}%
  \z@{-.8\linespacing\@plus-.3\linespacing}{.3\linespacing\@plus.2\linespacing}%
  {\normalfont\bfseries}}
\def\subsubsection{\@startsection{subsubsection}{3}%
  \z@{.7\linespacing\@plus.1\linespacing}{-1.5ex}%
  {\normalfont\itshape}}
\def\@secnumfont{\bfseries}
\def\H{\mathcal{H}}
\def\K{\mathcal{K}}
\def\Z{\mathbb{Z}}
\def\Q{\mathbb{Q}}
\def\R{\mathbb{R}}
\def\C{\mathbb{C}}
\def\d{\partial}
\def\tilde{\widetilde}
\def\Cov{\operatorname{Cov}}
\def\rank{\operatorname{rank}}
\def\Ker{\operatorname{Ker}}
\def\Coker{\operatorname{Coker}}
\def\Im{\operatorname{Im}}
\def\Hom{\operatorname{Hom}}
\def\GL{\operatorname{GL}}
\def\Ext{\operatorname{Ext}}
\def\inc{\operatorname{inc}}
\def\+{\oplus}
\theoremstyle{plain}
\newtheorem{theorem}{Theorem}[section]
\newtheorem{proposition}[theorem]{Proposition}
\newtheorem*{corollaryD}{Corollary D}
\newtheorem{lemma}[theorem]{Lemma}
\newtheorem*{theoremA}{Theorem A}
\newtheorem*{theoremB}{Theorem B}
\newtheorem*{theoremC}{Theorem C}
\newtheorem*{claim}{Claim}
\newtheorem*{claim1}{Claim 1}
\newtheorem*{claim2}{Claim 2}
\theoremstyle{definition}
\newtheorem{definition}[theorem]{Definition}
\newtheorem*{example0}{Example}
\newtheorem{remark}[theorem]{Remark}
\newtheorem*{remark0}{Remark}
\newtheorem*{convention}{Convention}
\def\to{\mathchoice{\longrightarrow}{\rightarrow}{\rightarrow}{\rightarrow}}
\newcommand{\shortxra}[2][]{\ext@arrow 0359\rightarrowfill@{#1}{#2}}
\def\longrightarrowfill@{\arrowfill@\relbar\relbar\longrightarrow}
\newcommand{\longxra}[2][]{\ext@arrow 0359\longrightarrowfill@{#1}{#2}}
\renewcommand{\xrightarrow}[2][]{\mathchoice{\longxra[#1]{#2}}%
  {\shortxra[#1]{#2}}{\shortxra[#1]{#2}}{\shortxra[#1]{#2}}}
\begin{document}

\title [Whitney towers and Link invariants]%
{Whitney towers, Gropes 
and Casson-Gordon style invariants of links}

\author{Min Hoon Kim}

\address{Department of Mathematics\\
Pohang University of Science and Technology\\
Pohang, Gyungbuk 790--784\\
Republic of Korea}

\email{kminhoon@gmail.com}



\begin{abstract}
In this paper, we prove a conjecture of Friedl and Powell that their Casson-Gordon type invariant of 2-component link with linking number one is actually an obstruction to being height 3.5 Whitney tower/grope concordant to the Hopf Link. The proof employs the notion of solvable cobordism of 3-manifolds with boundary, which was introduced by Cha. We also prove that the Blanchfield form and the Alexander polynomial of links in $S^3$ give obstructions to height 3 Whitney tower/grope concordance. This generalizes the results of Hillman and Kawauchi.
\end{abstract}

\maketitle

\section{Introduction}
\label{section:introduction}

In the study of topological knot concordance, various invariants were introduced in seminal papers including \cite{Levine:1969-1}, \cite{Casson-Gordon:1978-1,Casson-Gordon:1986-1}, and \cite{Cochran-Orr-Teichner:2003-1}. All of these invariants can be extracted from the 0-surgery manifolds of knots. Influenced by these works, the link slicing problem has been studied extensively using various covers of the 0-surgery manifolds of links. For example, \cite{Harvey:2006-1}, \cite{Cochran-Harvey-Leidy:2008-1}, and \cite{Horn:2010-1} used Cheeger-Gromov $\rho$-invariants from PTFA (poly-torsion-free-abelian) covers. In \cite{Cha:2010-1} and \cite{Cha:2009-1}, Hirzebruch type invariants from iterated prime power fold covers are defined and used.

In general link concordance problems, it is known that zero surgery manifolds do not reveal full information. For example, for 2-component links with linking number one, aformentioned invariants automatically vanish. In fact, those invariants are obtained from solvable covers of zero surgery manifolds. For 2-component links with linking number one, there are no non-trivial solvable covers of the zero surgery manifolds (and consequently aformentioned invariants vanish) because they have perfect fundamental groups. Also, there is an in-depth study which presents related results about link concordance versus zero surgery homology cobordism, see \cite{Cha-Powell:2013-1}.

Recently, for 2-component links with linking number one, S.\ Friedl and M.\ Powell \cite{Friedl-Powell:2011-1} introduced a Casson-Gordon style metabelian invariant $\tau(L,\chi)$ by considering another closed 3-manifold obtained from the link exterior.  Also, they found new 2-component links with linking number one which are not concordant to the Hopf link. The aim of this paper is to give a better understanding of $\tau(L,\chi)$ in the context of symmetric Whitney towers and gropes in dimension 4.

\subsection*{Friedl-Powell invariant $\tau(L,\chi)$}
To describe our main result, we briefly summarize the construction and main result in \cite{Friedl-Powell:2011-1}. (For more details, see Section~\ref{existenceofmetabolizer}.)

Let $L$ be an ordered,  oriented 2-component link with linking number 1 in $S^3$ and $H$ be the Hopf link. Define $M_L$ to be the closed 3-manifold obtained by gluing the exteriors of $L$ and $H$ along their boundary, identifying the meridians of corresponding components.  For a prime $p$, choose a homomorphism $\varphi\colon  H_1(M_L;\Z)\to  \Z/p^i\times \Z/p^j$ which sends two meridians of $L$ to the standard basis $(1,0)$ and $(0,1)$, respectively. Let $M_L^\varphi\to M_L^{\vphantom{\phi}}
$ be the $p^{i+j}$-fold covering induced by~$\varphi$.  For a prime $q$ and a character $\chi\colon H_1(M_L^\varphi;\Z)\to \Z/q^k$, Friedl and Powell define an invariant
\[\tau(L,\chi)\in  L^0(\mathbb{C}(\H))\mathbin{\mathop{\otimes}_{\Z}} \Z[1/q]\]
in \cite[Section 3.2]{Friedl-Powell:2011-1} (see also our Definition \ref{definition:Friedl-Powell-invariant}). Here, $\H=\Z^3$, $\C(\H)$ is the quotient field of the group ring $\C[\H]$, and $L^0(\C(\H))$ is the Witt group of finite dimensional non-singular sesquilinear forms over~$\C(\H)$.  The main result of \cite{Friedl-Powell:2011-1} essentially says that if $L$ is concordant to $H$, then  $\tau(L,\chi)$ vanishes \cite[Theorem 3.5]{Friedl-Powell:2011-1}. For a precise definition of the vanishing of $\tau(L,\chi)$, see  Definition \ref{definition:vanishing-Friedl-Powell-invariant}. We omit the precise statement here because we need to discuss some technicality including the choice of a metabolizer of the linking form.

\subsection*{Symmetric Whitney tower/grope concordance and $\tau(L,\chi)$}
The symmetric Whitney towers and gropes are approximations of embedded surfaces which play the central role in the study of topological 4-manifolds. For example, a special kind of grope with caps gives a topologically embedded disk in the disk embedding theorem of \cite{Freedman-Quinn:1990-1}. Also,  using symmetric Whitney towers and gropes, T.\ Cochran, K.\ Orr, and P.\ Teichner developed the filtration theory of the knot concordance group \cite{Cochran-Orr-Teichner:2003-1}.  It turns out that the structure of this filtration theory is extremely rich (for example, see \cite{Cochran-Orr-Teichner:2004-1}, \cite{Cochran-Teichner:2003-1}, \cite{Cochran-Harvey-Leidy:2009-01}, \cite{Cochran-Harvey-Leidy:2009-02}, and \cite{Cha:2010-01}). For links, we are mainly interested in two equivalence relations,  \emph{height $h$ Whitney tower concordance} and \emph{height $h$ grope concordance}.  (For precise definitions, see \cite[Definitions 2.12, 2.15]{Cha:2012-1}.)

We remark that J.\ Conant, R.\ Schneiderman, and Teichner developed another interesting filtration theory using coarser notion called order $n$ Whitney tower concordance (for survey and references, we refer \cite{Conant-Schneiderman-Teichner:2011-1}). 
It is not our purpose to study this asymmetric filtration theory of Conant-Schneiderman-Teichner. We focus on the finer equivalence relations,  symmetric Whitney tower/grope concordance.

Our main result, Theorem A, says that the Friedl-Powell invariant $\tau(L,\chi)$ can be understood in terms of symmetric Whitney tower/grope concordance as conjectured in \cite[Remark~1.3.(5)]{Friedl-Powell:2011-1}:
\begin{theoremA}Suppose that $L$ is a 2-component link  with linking number 1 and $H$ is the Hopf link. If $L$ and $H$ are height 3.5 Whitney tower \textup{(}or grope\textup{)} concordant, then the Friedl-Powell invariant $\tau(L,\chi)$ vanishes for $L$ in the sense of Definition \ref{definition:vanishing-Friedl-Powell-invariant}.
\end{theoremA}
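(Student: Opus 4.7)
The overall strategy is to follow the Casson-Gordon and Cochran-Orr-Teichner paradigm but to replace the usual concordance exterior by a weaker 4-dimensional object provided by Cha's notion of \emph{solvable cobordism of 3-manifolds with boundary}. The geometric input (height 3.5 Whitney tower or grope concordance) will be converted into a derived-series condition on a 4-manifold $W$ with $\partial W = M_L$, and that algebraic condition will be strong enough to make the twisted intersection form that defines $\tau(L,\chi)$ admit a metabolizer.

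First, starting from a height 3.5 Whitney tower or grope concordance between $L$ and $H$, I would apply Cha's theorem to obtain a compact 4-manifold $V$ whose boundary contains the exteriors of $L$ and $H$ in a controlled fashion, with the inclusions inducing the expected behaviour on the second derived quotients of $\pi_1$. Capping off the Hopf-link side by the exterior of the identity concordance on $H$ produces a 4-manifold $W$ with $\partial W = M_L$ over which the meridional homomorphism $\varphi\colon H_1(M_L;\Z)\to \Z/p^i\times \Z/p^j$ extends. This is the boundary version of the familiar implication that a height-$(n+2)$ structure yields an $n$-solvable filling, but carried out inside the exterior of $L\cup H$ rather than inside a zero-surgery. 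The fact that grope and Whitney tower concordance of the same height give the same solvability conclusion, either by direct comparison or by a conversion lemma between gropes and Whitney towers, lets one treat both cases uniformly.

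Next, I would pass to the induced $p^{i+j}$-fold cover $W^\varphi\to W$, noting that $\partial W^\varphi = M_L^\varphi$, and use the solvability of $W$ to argue that, after inverting $q$ as in the target $L^0(\C(\H))\otimes \Z[1/q]$, the character $\chi$ extends over $H_1(W^\varphi;\Z)$. With such an extension in hand, the twisted intersection form on $H_2(W^\varphi;\C[\H])$ represents $\tau(L,\chi)$ in the Witt group, and the solvability of $W$ provides a half-rank isotropic subspace for this form, so that its Witt class becomes trivial after inverting $q$. This would give exactly the vanishing required by Definition~\ref{definition:vanishing-Friedl-Powell-invariant}.

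The main technical obstacle will be the compatibility of metabolizers. The Friedl-Powell definition of vanishing of $\tau(L,\chi)$ requires $\chi$ to die on a specified \emph{admissible} metabolizer of the $\Z[\Z/p^i\times \Z/p^j]$-valued linking form on $M_L^\varphi$, not merely on some arbitrary metabolizer. One must therefore check that the metabolizer produced by the cobordism $W$, namely the kernel of $H_1(M_L^\varphi;\Z)\to H_1(W^\varphi;\Z)$ modulo torsion, is admissible in the Friedl-Powell sense. This requires delicate control of $H_\ast(W;\Z[\Z/p^i\times \Z/p^j])$, and in particular of the contribution coming from the Hopf-link side $E(H)\subset \partial V$, which is precisely where the boundary version of Cha's framework, rather than its closed-manifold predecessor, is needed. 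Establishing this compatibility, together with the Witt-group bookkeeping after $q$ is inverted, will occupy most of the proof.
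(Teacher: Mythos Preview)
Your overall architecture matches the paper's: convert the height~3.5 hypothesis into a $1.5$-solvable cobordism $(W_0;X_L,X_H)$ via Cha, thicken by $X_H\times I$ to obtain $W$ bounding $M_L$ (actually $M_L\sqcup -M_H$, with $M_H\cong T^3$, so a further gluing with an auxiliary piece $W_\chi$ is needed), extend $\varphi$ and then $\chi$ over the cover $W^\varphi$, and exhibit a lagrangian for both the $\K$- and $\Q$-valued intersection forms of $W^\varphi$. The metabolizer is not quite the one you wrote: the Friedl--Powell linking form lives on $tH_1(X_L^\varphi,Y_a^\varphi)$, and the paper takes $P=\Ker\bigl(tH_1(X_L^\varphi,Y_a^\varphi)\to tH_1(W_0^\varphi,Y_a^\varphi)\bigr)$; verifying $P=P^\perp$ requires a separate argument (Proposition~\ref{proposition:metabolizer}) using that prime-power covers of a $1$-solvable cobordism are rational $H_1$-cobordisms.

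The genuine gap is the sentence ``the solvability of $W$ provides a half-rank isotropic subspace for this form.'' In the Cochran--Orr--Teichner setting this follows because the coefficient system is PTFA, so a chain-homotopy lifting/Strebel argument guarantees that the image of the $2$-lagrangian stays half-rank after tensoring. Here the twisted coefficients come from $\chi\times\phi\colon\pi_1(W^\varphi)\to \Z/q^l\times\H$, which has torsion, and there is no a priori reason the $\K$-image of the lagrangian does not collapse. The paper handles this with a bespoke dimension estimate (Theorem~\ref{finalproposition}), built on the Friedl--Powell injectivity theorem for such Casson--Gordon type representations: one compares $\dim_{\K}H_2(W^\varphi,M_L^\varphi;\K)/L(\K)$ against $t\cdot\dim_{\Z/q}H_2(W,M_L;\Z/q)/L(\Z/q)$, and the right-hand side is computable from the lagrangian/dual data. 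Without this comparison you cannot conclude $\dim_\K L(\K)=rt$, and the Witt-triviality of $\lambda_\K(W^\varphi)$ does not follow. You should also note that $\tau(L,\chi)$ is a \emph{difference} of two forms, so you must separately check $[\lambda_\Q(W^\varphi)]=0$; this uses the $1$-duals and Proposition~\ref{proposition1} on prime-power covers.
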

 In the proof, we use the notion of \emph{$h$-solvable cobordism}, introduced by J.\ C.\ Cha in \cite{Cha:2012-1} (for the definition, see Section~\ref{subsection:definition-solvable-cobordism}). By \cite[Theorem~2.13]{Cha:2012-1}, if two links $L$ and $L'$ are height $(h+2)$ Whitney tower/grope concordant, then their exteriors $X_L$ and $X_{L'}$ are $h$-solvable cobordant for all $h\in \frac{1}{2}\mathbb{Z}_{\geq 0}$. Actually, we prove Theorem A in Section \ref{subsection:proof-of-main-theorem} under the (potentially) weaker assumption that there exists a 1.5-solvable cobordism between the exteriors $X_L$ and $X_H$.

\begin{remark0}~
\begin{enumerate}
\item In \cite[Theorem 9.11]{Cochran-Orr-Teichner:2003-1}, Cochran, Orr, and Teichner proved that if a knot $K$ bounds a Whitney tower/grope of height 3.5 in $D^4$, or more generally if $K$ is 1.5-solvable, then the Casson-Gordon invariant $\tau(K,\chi)$ vanishes. Our result can be viewed as an analogue for 2-component links with linking number~1.
\item Theorem A is strictly stronger than \cite[Theorem 3.5]{Friedl-Powell:2011-1} by the following known fact: for any integer $n>2$, there are links which are height $n$ grope concordant to $H$ but not height $n.5$ Whitney tower concordant to $H$ (in particular, not concordant to $H$) \cite[Theorem 4.1]{Cha:2012-1}.  

\end{enumerate}
\end{remark0} 
\subsection*{Symmetric Whitney tower/grope concordance and abelian invariants}

In \cite[Theorem~1.1]{Cochran-Orr-Teichner:2003-1}, Cochran, Orr, and Teichner proved that a Seifert form of a knot $K$ is metabolic if and only if $K$ bounds a height 2.5 grope in $D^4$. By \cite[Corollary~2]{Schneiderman:2006-1} and \cite[Theorem~8.12]{Cochran-Orr-Teichner:2003-1}, this condition is equivalent to that $K$ bounds a height 2.5 Whitney tower in $D^4$.  Motivated from this result, in Section \ref{abelianinvariants}, we prove that Blanchfield form and the multivariable Alexander polynomial are actually obstructions to height 3 Whitney tower/grope concordance.

Abelian link concordance invariants are studied by A. Kawauchi \cite{Kawauchi:1978-1} and J. Hillman \cite{Hillman:2012-1}. To state our main result, we recall their notations (for details, see~Section~\ref{abelianinvariants}) and main results. Let $L$ be a $\mu$-component link and $X_L$ be the exterior of $L$. Denote $\Z[t_1^\pm,\ldots,t_\mu^\pm]$ by $\Lambda_\mu$.  The ring $\Lambda_\mu$ is endowed with the involution $-\colon t_i\mapsto t_i^{-1}$. Let $S$ be the multiplicative set generated by $\{t_1-1,\ldots,t_\mu-1\}$. Denote by $\Lambda_{\mu S}$ the localization of $\Lambda_\mu$ with respect to~$S$. Let $\K$ be the quotient field of~$\Lambda_\mu$. Using the Hurewicz map $\pi_1(X_L)\to \Z^\mu$, we define $H_*(X_L;\Lambda_\mu)$ and $H_*(X_L;\Lambda_{\mu S})$.

In \cite[Chapter~2]{Hillman:2012-1}, Hillman defined $\K/\Lambda_{\mu S}$-valued the localized Blanchfield form $b_L$ defined on the quotient of the torsion submodule of $H_1(X_L;\Lambda_{\mu S})$ by its maximal pseudonull-submodule. Also, he proved that the Witt-class of $b_L$, denoted by $[b_L]$, in the Witt group $W(\K, \Lambda_{\mu S},-)$ is a link concordance invariant.
 
In \cite{Kawauchi:1978-1}, Kawauchi defined the torsion Alexander polynomial of $L$ which we denote it by  ~$\Delta_L^T$. In \cite[Theorems~A,~B]{Kawauchi:1978-1}, he proved that if two links $L_0$ and $L_1$ are concordant, then $\rank_{\Lambda_{\mu}} H_1(X_{L_0};\Lambda_{\mu})=\rank_{\Lambda_{\mu}} H_1(X_{L_1};\Lambda_{\mu})$ and $\Delta^T_{L_0}f_0\overline{f_0}\overset{\cdot}=\Delta_{L_1}^T f_1\overline{f_1}$ for some $f_i(t_1,\ldots,t_\mu)\in \Lambda_{\mu}$, $i=0,1$ with $|f_i(1,\ldots,1)|=1$. 

We extend these theorems of Hillman and Kawauchi in terms of symmetric Whitney tower/grope concordance as follows:
\begin{theoremB}Suppose that two links $L_0$ and $L_1$ are height 3 Whitney tower/grope concordant. Then, $[b_{L_0}]=[b_{L_1}]\in W(\K,\Lambda_{\mu S},-)$.
\end{theoremB}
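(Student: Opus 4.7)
The plan is to combine Cha's bridge from symmetric Whitney tower/grope concordance to solvable cobordism with a multivariable Cochran--Orr--Teichner-style Witt-theoretic argument, carried out over the localized Laurent polynomial ring $\Lambda_{\mu S}$. First, by \cite[Theorem~2.13]{Cha:2012-1} applied with $h=1$, the hypothesis that $L_0$ and $L_1$ are height 3 Whitney tower/grope concordant yields a $1$-solvable cobordism $W$ between the exteriors $X_{L_0}$ and $X_{L_1}$. The meridional identifications built into the definition of solvable cobordism make each inclusion-induced map $H_1(X_{L_i};\Z)\to H_1(W;\Z)\cong\Z^\mu$ send meridians to the standard generators, so we obtain a canonical $\Lambda_\mu$-cover of $W$ extending the standard multivariable Alexander covers on the boundary.

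Second, I would extract a self-annihilating ``half-rank'' submodule $L\subset H_2(W;\Lambda_{\mu S})$ with respect to the $\Lambda_{\mu S}$-valued intersection pairing. The $1$-solvability of $W$ supplies a Lagrangian for the intersection form over $\Z[\pi_1(W)/\pi_1(W)^{(2)}]$; restricting this coefficient system along the abelianization $\pi_1(W)\to\Z^\mu$ and then inverting $S$ gives such an $L$. Localizing at $S$ is essential for controlling the pseudonull phenomena intrinsic to the non-PID multivariable setting, which distinguishes this from the classical knot case.

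Third, using Poincar\'e--Lefschetz duality on $W$ together with the long exact sequence of the pair $(W,\partial W)$ in $\Lambda_{\mu S}$-coefficients, I would identify the image of $L$ under the boundary map $H_2(W,\partial W;\Lambda_{\mu S})\to H_1(\partial W;\Lambda_{\mu S})$, modulo the maximal pseudonull submodule, with a metabolizer for the localized Blanchfield form of $\partial W$. Since the solvable cobordism has $\partial W\simeq X_{L_0}\cup_\partial(-X_{L_1})$ with matched meridians, the boundary Blanchfield form decomposes as $b_{L_0}\oplus(-b_{L_1})$, and the existence of this metabolizer then yields $[b_{L_0}]=[b_{L_1}]$ in $W(\K,\Lambda_{\mu S},-)$.

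The main obstacle I anticipate is the ``half-rank'' verification over the non-PID ring $\Lambda_{\mu S}$: unlike the knot setting where $\Lambda_1=\Z[t^{\pm 1}]$ is a PID and one can transparently compare $\Lambda_1$-ranks to $\Z$-ranks to get a Lagrangian from a $1$-Lagrangian, for $\mu\geq 2$ one has to argue module-theoretically with torsion and pseudonull submodules over $\Lambda_{\mu S}$ and verify that the needed ranks are preserved by the coefficient restriction. A secondary technical point will be checking that the natural boundary map from $W$ is compatible with Hillman's maximal-pseudonull-quotient construction, so that the metabolizer produced abstractly really descends to one for $b_{L_0}\oplus(-b_{L_1})$ in the category where the Witt classes $[b_{L_i}]$ are defined.
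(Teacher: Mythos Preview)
Your overall strategy---pass to a $1$-solvable cobordism $W$ via \cite[Theorem~2.13]{Cha:2012-1}, split the boundary Blanchfield form as $b_{L_0}\oplus(-b_{L_1})$, and exhibit a metabolizer---matches the paper. The tactical execution diverges, and your Step~3 contains a genuine confusion.

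You place the Lagrangian $L$ in $H_2(W;\Lambda_{\mu S})$ but then speak of ``the image of $L$ under the boundary map $H_2(W,\partial W;\Lambda_{\mu S})\to H_1(\partial W;\Lambda_{\mu S})$''. As written this is undefined, and the composite $H_2(W)\to H_2(W,\partial W)\xrightarrow{\partial} H_1(\partial W)$ is zero by exactness, so pushing $L$ through that way yields nothing. The paper avoids this by \emph{not} tracking the Lagrangian at the Blanchfield level at all. It invokes \cite[Theorem~4.13]{Cha:2012-1} (restated as Lemma~\ref{abelianlemma}) as a black box: that result already absorbs the $1$-Lagrangian/$1$-dual data into the exactness of
\[
tH_2(W,\partial W;\Lambda_\mu)\to tH_1(\partial W;\Lambda_\mu)\to tH_1(W;\Lambda_\mu).
\]
The candidate submodule is then $P=\Im\bigl(tH_2(W,\partial W;\Lambda_{\mu S})\to \hat{t}H_1(\partial W;\Lambda_{\mu S})\bigr)$, with no further mention of~$L$.

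From there the paper shows $P\subset P^\perp$ by a direct chain-level computation relating $b_{\partial W}$ to the pairing $b_W$ on $W$, and then---in place of your anticipated half-rank verification---shows that $P^\perp/P$ is pseudonull, using injectivity of $\operatorname{Ad}(b_W)$. A short homological-algebra argument (vanishing of $\Hom_{\Lambda_{\mu S}}(-,\K/\Lambda_{\mu S})$ on pseudonull modules, via $\Ext^1_{\Lambda_{\mu S}}(-,\Lambda_{\mu S})=0$) then gives $P^{\perp\perp}=P^\perp$, so the metabolizer is $Q=P^\perp$, not $P$ itself. Your concern about rank-matching over the non-PID $\Lambda_{\mu S}$ is legitimate, but the paper sidesteps it: rather than proving $P$ already has half rank, it allows $P$ to fall short by a pseudonull defect and passes to the perp to correct this. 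Your second anticipated obstacle (compatibility with Hillman's pseudonull quotient) is exactly what this $P^{\perp\perp}=P^\perp$ step handles.
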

\begin{theoremC}Suppose that two links $L_0$ and $L_1$ are height 3 Whitney tower/grope concordant. Then,
\begin{enumerate}
\item $\rank_{\Lambda_{\mu}} H_1(X_{L_0};\Lambda_{\mu})=\rank_{\Lambda_{\mu}} H_1(X_{L_1};\Lambda_{\mu})$ and 
\item $\Delta^T_{L_0}f_0\overline{f_0}\overset{\cdot}=\Delta_{L_1}^T f_1\overline{f_1}$ for some $f_i(t_1,\ldots,t_\mu)\in \Lambda_\mu$, $i=0,1$ with $|f_i(1,\ldots,1)|=1$.
\end{enumerate}
\end{theoremC}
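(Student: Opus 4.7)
The plan is to reduce Theorem~C to a corresponding statement about $1$-solvable cobordisms. By \cite[Theorem~2.13]{Cha:2012-1}, a height~$3$ Whitney tower (or grope) concordance from $L_0$ to $L_1$ produces a $1$-solvable cobordism $W$ between the exteriors $X_{L_0}$ and $X_{L_1}$, in which meridians of $L_0$ are identified with the corresponding meridians of $L_1$. The abelianization $\pi_1(W)\to H_1(W;\Z)\cong \Z^\mu$ equips $W$ and the $X_{L_i}$ with compatible $\Lambda_\mu$-local coefficients. I will prove both statements in this (potentially) weaker setting.

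For part~(1), I would examine the long exact sequence of $(W, X_{L_0}\sqcup X_{L_1})$ with coefficients in the quotient field $\K$ of $\Lambda_\mu$. Applying Poincar\'e--Lefschetz duality on $W$ together with a half-lives-half-dies argument, using the $1$-Lagrangian and dual $1$-Lagrangian provided by $1$-solvability, I will show that $\dim_{\K} H_1(X_{L_0};\K) = \dim_{\K} H_1(X_{L_1};\K)$, which is equivalent to the desired $\Lambda_\mu$-rank equality.

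For part~(2), I will analyze the $\Lambda_\mu$-torsion in the same sequence. The factor $f_i$ will arise as the order of the part of the image of $H_1(X_{L_i};\Lambda_\mu)$ in $H_1(W;\Lambda_\mu)$ detected by the $1$-Lagrangian, and the self-conjugate factor $\overline{f_i}$ will appear via Poincar\'e--Lefschetz duality on $W$, so that after combining the two contributions one obtains the claimed equality $\Delta^T_{L_0}f_0\overline{f_0}\overset{\cdot}= \Delta^T_{L_1}f_1\overline{f_1}$. The normalization $|f_i(1,\ldots,1)|=1$ will be verified by specializing the coefficient system to $\Z$ via the augmentation $t_j\mapsto 1$: because the inclusion $X_{L_i}\hookrightarrow W$ induces an isomorphism on integral $H_1$ (each side is $\Z^\mu$ generated by meridians), the integer $f_i(1,\ldots,1)$ must be a unit in $\Z$.

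The main obstacle will be the precise torsion bookkeeping. Unlike the classical setting (where a concordance cylinder gives a genuine $\Z^\mu$-homology cobordism and the inclusions are isomorphisms on $H_1(-;\Lambda_\mu)$), $1$-solvability only provides control modulo second-derived quotients, so the maps $H_1(X_{L_i};\Lambda_\mu)\to H_1(W;\Lambda_\mu)$ can have nontrivial kernel and cokernel. Combining the Lagrangian data with Ore-localization at $S=\{t_1-1,\dots,t_\mu-1\}$ and with the duality argument used for Theorem~B to isolate exactly the $f\overline{f}$ factor, rather than an uncontrolled multiplier, is the technical heart of the proof.
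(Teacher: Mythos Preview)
Your reduction to a $1$-solvable cobordism via \cite[Theorem~2.13]{Cha:2012-1} matches the paper, and your approach to part~(1) is close in spirit: the paper also passes to $\K$-coefficients, shows $H_i(W,X_{L_j};\K)=0$ for $i\neq 2$ using \cite[Proposition~2.10]{Cochran-Orr-Teichner:2003-1} and an Euler characteristic count, and then reads off the rank equality from the exact sequence
\[
tH_2(W,X_{L_i};\Lambda_\mu)\to H_1(X_{L_i};\Lambda_\mu)\to H_1(W;\Lambda_\mu)\to tH_1(W,X_{L_i};\Lambda_\mu)
\]
supplied by \cite[Theorem~4.13]{Cha:2012-1}.

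For part~(2), however, your description of where $f_i$ and $\overline{f_i}$ come from does not match the paper's argument and, as stated, has a real gap. Two specific issues:

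\emph{The augmentation condition.} You say $|f_i(1,\ldots,1)|=1$ follows by ``specializing to $\Z$,'' but the relevant relative module $H_2(W,X_{L_i};\Lambda_\mu)$ has $\Lambda_\mu$-rank $2r>0$, so one must compare the \emph{torsion} Alexander polynomial $\Delta_0(tH_2)$ to the $2r$-th elementary ideal and show that augmenting preserves the needed rank equality. This is the content of the paper's Lemma~\ref{lemma2.1kawauchi}, a genuine generalization of \cite[Lemma~2.1]{Kawauchi:1978-1}: it shows that when $H_k(W,X_{L_i};\Z)\cong\Z^l$ and $\rank_{\Lambda_\mu}H_k(W,X_{L_i};\Lambda_\mu)=l$, the torsion polynomial augments to $\pm 1$. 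Without this lemma your augmentation argument is incomplete, precisely because $1$-solvability (unlike concordance) does not make $H_2(W,X_{L_i};\Lambda_\mu)$ torsion.

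\emph{The conjugate-symmetric factor.} You attribute $\overline{f_i}$ to ``Poincar\'e--Lefschetz duality on $W$,'' but duality alone only gives $\Delta_{L_i}^T\overset{\cdot}=\overline{\Delta_{L_i}^T}$; it does not force the extraneous factor to have the shape $f\overline f$. In the paper the step is: from the exact sequence above and Lemma~\ref{lemma2.1kawauchi} one gets $\Delta_{L_0}^T g\overset{\cdot}=\Delta_{L_1}^T g'$ with $|g(1,\ldots,1)|=|g'(1,\ldots,1)|=1$; then one invokes Theorem~B (the Witt-triviality of $b_{\partial W}$) and \cite[Theorem~3.27]{Hillman:2012-1} to conclude $\Delta_{\partial W}\overset{\cdot}=h\overline h$; finally a UFD factorization separating irreducible factors with $|f(1,\ldots,1)|=1$ from the rest combines these into the stated $f_i\overline{f_i}$ form. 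Your outline does mention Theorem~B at the end, but the mechanism you describe---$f_i$ as the order of ``the part of the image detected by the $1$-Lagrangian'' with $\overline{f_i}$ coming from duality---is not how the pairing of factors is actually produced.
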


As a special case of Theorems~B and C for 2-component links with linking number 1, we have the following special case. This illustrates that the concordance problem between 2-component link with linking number 1 and the Hopf link is similar with the concordance problem between knot and the unknot.
\begin{corollaryD}Suppose that $L$ is a 2-component link with linking number 1 and $H$ is the Hopf link. If $L$ and $H$ are height 3 Whitney tower/grope concordant, then 
\begin{enumerate}
\item $[b_L]=0\in W(\K,\Lambda_2,-)$,
\item $\rank_{\Lambda_2} H_1(X_L;\Lambda_2)=0$, 
\item $\Delta_L^T\overset{\cdot}=f\overline{f}$ 
for some $f(t_1,t_2)\in \Lambda_2$ such that $|f(1,1)|=1$.
\end{enumerate}
\end{corollaryD}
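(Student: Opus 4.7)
The plan is to deduce Corollary~D from Theorems~B and~C applied to the pair $(L_0,L_1)=(L,H)$, together with a direct computation of the abelian invariants of the Hopf link. Once the Hopf-link invariants are determined to be trivial, parts~(1) and~(2) follow immediately, and part~(3) will reduce to an elementary norm-extraction step in the unique factorization domain $\Lambda_2$.

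The first step is the Hopf-link computation. The exterior $X_H$ is well-known to deformation retract onto the torus $T^2=S^1\times S^1$, with the Hurewicz map $\pi_1(X_H)\to\Z^2$ an isomorphism. Hence the $\Z^2$-cover of $X_H$ classified by the Hurewicz map is homotopy equivalent to the universal cover $\R^2$ of $T^2$, which is contractible, so $H_i(X_H;\Lambda_2)=0$ for all $i\geq 1$. Consequently the Blanchfield form $b_H$ vanishes in $W(\K,\Lambda_2,-)$, $\rank_{\Lambda_2}H_1(X_H;\Lambda_2)=0$, and the torsion Alexander polynomial $\Delta^T_H$ is a unit of $\Lambda_2$, which we may take to be~$1$. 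Substituting these into the conclusions of Theorems~B and~C yields (1) and~(2) at once, and reduces the identity in~(3) to
\[
 \Delta^T_L\, f_L\overline{f_L}\overset{\cdot}= f_H\overline{f_H}
\]
for some $f_L,f_H\in\Lambda_2$ with $|f_L(1,1)|=|f_H(1,1)|=1$.

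For part~(3), the remaining step is purely algebraic. Working in the UFD $\Lambda_2$, one classifies its primes as either Hermitian ($p\overset{\cdot}=\overline{p}$) or non-Hermitian (in which case $p$ and $\overline{p}$ form a non-associate conjugate pair). Comparing $p$-adic valuations on both sides of the displayed identity, and using that $\Delta^T_L$ is itself Hermitian symmetric up to a unit, one checks that every Hermitian prime occurs in $\Delta^T_L$ to an even multiplicity, and that the multiplicities of $p$ and $\overline{p}$ agree for each non-Hermitian pair. Assembling a product $f\in\Lambda_2$ with half the multiplicity at each Hermitian prime and the full multiplicity at a chosen representative of each non-Hermitian pair produces $f$ satisfying $\Delta^T_L\overset{\cdot}= f\overline{f}$. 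Finally, evaluating the displayed identity at $(t_1,t_2)=(1,1)$ gives $|\Delta^T_L(1,1)|=1$, whence $|f(1,1)|^2=|\Delta^T_L(1,1)|=1$, so $|f(1,1)|=1$.

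The main conceptual content has already been absorbed by Theorems~B and~C; the work specific to Corollary~D is only the Hopf-link calculation and the routine norm-extraction step above, neither of which presents a serious obstacle.
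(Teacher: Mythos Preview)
Your argument is correct and, for parts~(1) and~(2), coincides with the paper's: compute that $X_H\simeq S^1\times S^1\times I$ has contractible $\Z^2$-cover, so all abelian invariants of $H$ vanish, and then apply Theorems~B and~C with $L_0=H$, $L_1=L$.

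For part~(3) you take a genuinely different route from the paper. The paper does not work from the \emph{statement} of Theorem~C; instead it reaches back into the proof of Theorem~C and uses the sharper intermediate identity
\[
\Delta_{L_0}^T\, f\overline{f}\ \overset{\cdot}{=}\ \Delta_{L_1}^T\, u_0\overline{u_0},
\]
where $u_0$ is the product of the irreducible factors of $\Delta_{L_0}^T$ with augmentation~$\pm 1$. Setting $L_0=H$ gives $\Delta_{L_0}^T=1$ and hence $u_0=1$, so $\Delta_L^T\overset{\cdot}{=}f\overline{f}$ with $|f(1,1)|=1$ falls out immediately, with no further algebra. You instead use only the stated conclusion of Theorem~C, obtaining $\Delta_L^T f_L\overline{f_L}\overset{\cdot}{=}f_H\overline{f_H}$, and then supply a separate norm-extraction step in the UFD $\Lambda_2$ (parity of Hermitian-prime valuations from the displayed identity, equality of valuations at conjugate non-Hermitian pairs from $\Delta_L^T\overset{\cdot}{=}\overline{\Delta_L^T}$, and finally $|f(1,1)|=1$ by evaluating at $(1,1)$). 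Both approaches are sound. The paper's is shorter but depends on internals of another proof; yours is self-contained at the cost of reproving a factorization statement that, in effect, already sits inside the proof of Theorem~C.
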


\begin{remark0}Theorems B and C should be compared to the following equivalent statements for knots about abliean invariants. (e.g.\ \cite[Theorem 1.1]{Cochran-Orr-Teichner:2003-1} and \cite[Theorem 5.10]{Cha:2007-1}.)
\begin{enumerate}
\item The knot $K$ bounds a grope of height 2.5 in $D^4$.
\item The 0-surgery manifold of $K$, $M_K$ is 0.5 solvable.
\item The Seifert form of $K$ is metabolic (or $K$ is algebraically slice).
\item The Blanchfield form of $K$ is Witt-trivial.
\end{enumerate}
Therefore, the most natural assumption for Theorems B and C might be the existence of 0.5-solvable cobordism between link exteriors. The proof for the knot case heavily relies on the existence of Seifert surfaces for $K$. For general links, as substitutes for Seifert surfaces, there are \emph{immersed} Cooper surfaces studied in \cite{Cooper:1982-1} (or its generalization in \cite{Cimasoni:2004-1}). However, because of their singularities, the similar approach using Cooper surface seems somewhat difficult.
\end{remark0}

\subsection*{Acknowledgements}
The author would like to express his deep gratitude to his advisor Jae Choon Cha for suggesting the problem and for many valuable conversations about this work. He also would like to thank Stefan Friedl, Jonathan Hillman, Mark Powell for their helpful suggestions. This research was supported by a National Research Foundation of Korea Grant funded by the Korean Government (NRF--2011--0002353).

\section{Casson-Gordon type representations}\label{section:Casson-Gordon-type-representations}
The goal of this section is to prove Theorem \ref{finalproposition} which will give the key dimension estimate in the proof of Theorem A. Lemma \ref{proposition3} and Theorem \ref{finalproposition} are inspired by \cite[Lemma 3.10 and Theorem 3.11]{Cha:2010-01}. In the proof of Lemma \ref{proposition3}, we use the injectivity theorem of Friedl and Powell \cite[Theorem 3.1]{Friedl-Powell:2010-1} stated in Lemma \ref{injectivitytheorem}.  

We recall the notations used in \cite{Friedl-Powell:2010-1} for the convenience of the reader. Let $\varphi\colon G\to A$ be a surjective group homomorphism where $A$ is a finite abelian $p$-group.  Assume that $\varphi\colon G\to A$ factors through a surjective homomorphism $\phi'\colon G\to \H'$ to a torsion free abelian group $\H'$. Let $K=\Ker\varphi$, $\H=\Im(\phi'|_K)$ and $\phi\colon K\to \H$ be the restriction of $\phi'$ to $K$. Note that $\H$ is a torsion free abelian group. 
In short,  we have the following commutative diagram:
\[\begin{diagram}
\node{1}\arrow{e}\node{K}\arrow{s,l}{\phi=\phi'|_{K}}\arrow{e}\node{G}\arrow{e,t}{\varphi}\arrow{s,l}{\phi'}\node{A}\arrow{e}\node{1}\\
\node[2]{\H}\arrow{e}\node{\H'}\arrow{ne}
\end{diagram}
\]
Suppose that $\alpha\colon K\to \GL(d,Q)$ is a $d$-dimensional representation to a field $Q$ of characteristic zero such that $\alpha|_{\Ker\phi}$ factors through a $q$-group for some prime $q$. Let $Q(\H)$ be the quotient field of the group ring $Q[\H]$. Note that $\alpha$ and $\phi$ give the right $\Z K$-module structure on $Q^d\otimes_{Q}Q(\H)=Q(\H)^d$ as follows:
 \begin{equation*}\label{equation:twisted-module}\tag{$\ast$}\begin{aligned} Q^d\mathbin{\mathop{\otimes}_Q}Q(\H)\times \Z K&\to Q^d\mathbin{\mathop{\otimes}_Q}Q(\H)\\
 (v\otimes p,g)&\longmapsto (v\cdot \alpha(g)\otimes p,\phi(g))
 \end{aligned}
 \end{equation*}
 We write $t=|A|$. $\Z G$ is a left (rank $t$ free) $\Z K$-module.  Note that there is a right action of $G$ on $dt$-dimensional $Q$-vector space $Q^d\mathbin{\mathop{\otimes}_{\mathbb{Z}K}}\Z G$. Equivalently, there is an induced representation $\alpha'\colon G\to \GL(dt,Q)$.
 
 As in $\eqref{equation:twisted-module}$, $\alpha'$ and $\phi'$ give the right $\Z G$-module structure on $Q(\H')^{dt}=Q^{dt}\otimes_{Q}Q(\H')$:
\begin{equation*}\label{equation:induced-representation}\tag{$\ast\ast$}
\begin{aligned}
Q^{dt}\mathbin{\mathop{\otimes}_Q}Q(\H')\times \Z G&\to Q^{dt}\mathbin{\mathop{\otimes}_Q}Q(\H')\\
(v\otimes p,g)&\longmapsto (v\cdot \alpha'(g)\otimes p,\phi'(g))
\end{aligned}
\end{equation*}
Regard $\Z/q$ as a $\Z G$-module with the trivial $G$-action. Now, we state \cite[Theorem 3.1]{Friedl-Powell:2010-1}.
\begin{lemma}\cite[Theorem 3.1]{Friedl-Powell:2010-1}\label{injectivitytheorem} Let $f\colon M\to N$ be a morphism of projective left $\Z G$-modules such that 
\[1_{\Z/q}\mathbin{\mathop{\otimes}_{\mathbb{Z}G}} f\colon \Z/q\mathbin{\mathop{\otimes}_{\mathbb{Z}G}}M\to \Z/q\mathbin{\mathop{\otimes}_{\mathbb{Z}G}}N\]
is injective. Then,
\[ 1_{Q(\H')^{dt}}\mathbin{\mathop{\otimes}_{\mathbb{Z}G}}f\colon Q(\H')^{dt}\mathbin{\mathop{\otimes}_{\mathbb{Z}G}}M\to Q(\H')^{dt}\mathbin{\mathop{\otimes}_{\mathbb{Z}G}}N\]
is injective.
\end{lemma}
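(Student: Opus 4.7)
The plan is to reduce the desired $Q(\H')^{dt}$-injectivity to the assumed $\Z/q$-injectivity through a short chain of flat base changes, exploiting the three structural hypotheses: $A$ is a finite abelian $p$-group (so $\Z G$ is free of rank $t$ over $\Z K$); $\H'$ is torsion-free abelian (so $Q(\H')$ is a field, faithfully flat over $Q(\H)$); and $\alpha|_{\Ker\phi}$ factors through a finite $q$-group $P$ (so $Q[P]$ is semisimple by Maschke's theorem, since $\operatorname{char} Q = 0$).

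The first move is a Shapiro-type unraveling. Since $\alpha'$ is the representation of $G$ induced from $\alpha\colon K\to \GL(d,Q)$, we have $Q^{dt}\cong Q^d\otimes_{\Z K}\Z G$ as right $\Z G$-modules; combining this with the $G$-action on $Q(\H')$ via $\phi'$, I expect a natural isomorphism
\[ Q(\H')^{dt} \otimes_{\Z G} M \;\cong\; Q(\H') \otimes_{Q(\H)} \bigl(Q(\H)^d \otimes_{\Z K} M\bigr), \]
and similarly for $N$, where $Q(\H)^d$ carries the right $\Z K$-action coming from $\alpha$ on $Q^d$ and $\phi=\phi'|_K$ on $Q(\H)$. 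Because $Q(\H')\mid Q(\H)$ is faithfully flat (indeed an Ore-localized polynomial extension by $\H'/\H$), and because restriction from $\Z G$ to $\Z K$ preserves projectivity, this step reduces the problem to showing injectivity of $1_{Q(\H)^d}\otimes_{\Z K} f|_{\Z K}$.

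For the second step, the factorization of $\alpha|_{\Ker\phi}$ through $P$ makes the right $\Z K$-action on $Q^d \otimes_Q Q(\H)$ descend through a twisted group ring $R$ that is an Ore localization of $Q[P]\otimes_Q Q[\H]$ at the nonzero elements of $Q[\H]$. Since $P$ is a finite $q$-group and $\operatorname{char} Q=0$, $Q[P]$ is semisimple, so $R$ is a semisimple $Q(\H)$-algebra of finite rank. On the $\Z G$ side, the assumed $\Z/q$-injectivity translates, via the freeness of $\Z G$ over $\Z K$ and a Shapiro argument for $\Z/q$-coefficients, into an analogous $\Z/q$-injectivity statement at the $\Z K$-level. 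Semisimplicity of $R$ then forces any kernel element of $1\otimes f|_{\Z K}$ in $Q(\H)^d\otimes_{\Z K} M$ to reduce to a nonzero kernel element modulo $q$, contradicting the hypothesis.

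I expect the technical heart of the argument — and the main obstacle — to be the second step: precisely matching the $\Z/q$-hypothesis on the $\Z G$-side with the $Q(\H)^d$-injectivity on the $\Z K$-side. This requires careful Ore-localization bookkeeping to ensure that (i) the Shapiro identifications are compatible with the diagonal $G$-action on $Q(\H')^{dt}$ (not merely with the action on the induced vector space $Q^{dt}$), and (ii) no $q$-torsion survives the localization inverting $Q[\H]\setminus\{0\}$, so that the mod-$q$ detection genuinely rules out the kernel. Once these compatibilities are in place, flatness of $Q(\H')$ over $Q(\H)$ lifts the injectivity back up to the target coefficient module.
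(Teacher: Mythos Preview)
The paper does not prove this lemma at all: it is stated verbatim as \cite[Theorem~3.1]{Friedl-Powell:2010-1} and immediately used as a black box to derive Lemma~\ref{proposition3} and Theorem~\ref{finalproposition}. There is therefore no ``paper's own proof'' to compare your proposal against.

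That said, your outline is broadly consonant with how the Friedl--Powell argument actually runs. The Shapiro-type identification in your first step is exactly the reduction used in the present paper (see the proof of Theorem~\ref{finalproposition}, where the identity $Q(\H')\otimes_{Q(\H)} Q(\H)^d\otimes_{\Z K}\Z G = Q(\H')^{dt}\otimes_{\Z G}\Z G$ is invoked), and the flatness of $Q(\H')$ over $Q(\H)$ is also used there. Your second step correctly identifies Maschke's theorem for the $q$-group $P$ as the source of semisimplicity.

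Where your sketch remains genuinely incomplete is the sentence ``Semisimplicity of $R$ then forces any kernel element \ldots\ to reduce to a nonzero kernel element modulo $q$.'' This is the entire content of the theorem, and semisimplicity alone does not give it: one needs a Strebel-type result relating injectivity over $\Z/q$ to injectivity over skew fields arising from the group ring. The Friedl--Powell proof decomposes $Q[P]$ as a product of matrix algebras over fields, reduces to a single skew-field coefficient, and then invokes an argument (going back to Strebel and used by Cochran--Orr--Teichner) that for suitable groups a map of projective modules injective after tensoring with $\Z/q$ remains injective after tensoring with any skew field over which the group ring embeds. Your proposal gestures at this (``no $q$-torsion survives the localization'') but does not supply the mechanism; you would need to either reproduce that Strebel-type lemma or cite it explicitly.
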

Using Lemma \ref{injectivitytheorem}, we prove Lemma \ref{proposition3} and Theorem \ref{finalproposition}.   
\begin{lemma}\label{proposition3} Let $f\colon M\to N$ be a morphism of left $\Z G$-modules.
\begin{enumerate}
\item If $N$ is projective, then 
\[ dt\cdot \dim_{\Z/q}\Im (1_{\Z/q}\mathbin{\mathop{\otimes}_{\mathbb{Z}G}}f)\leq \dim_{Q(\H')} \Im (1_{Q(\H')^{dt}}\mathbin{\mathop{\otimes}_{\mathbb{Z}G}}f).\]
\item If, in addition, $M$ is finitely generated and free, then 
\[ dt \cdot \dim_{\Z/q}\Ker (1_{\Z/q}\mathbin{\mathop{\otimes}_{\mathbb{Z}G}}f)\geq \dim_{Q(\H')} \Ker(1_{Q(\H')^{dt}}\mathbin{\mathop{\otimes}_{\mathbb{Z}G}}f).\]
\end{enumerate}
\end{lemma}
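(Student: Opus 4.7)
The plan is to deduce both parts from Lemma~\ref{injectivitytheorem} by constructing an auxiliary map out of a free $\Z G$-module whose rank equals the relevant $\Z/q$-dimension, then reading off the image (and, for the kernel, invoking rank-nullity).

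For part~(1), I would let $r=\dim_{\Z/q}\Im(1_{\Z/q}\otimes_{\Z G} f)$ and pick elements $m_1,\dots,m_r\in M$ whose values $f(m_i)$ become linearly independent in $\Z/q\otimes_{\Z G} N$. Defining $h\colon (\Z G)^r\to M$ by $e_i\mapsto m_i$ and $g=f\circ h$, the map $1_{\Z/q}\otimes_{\Z G} g$ sends $1\otimes e_i$ to $1\otimes f(m_i)$ and is therefore injective by choice of the $m_i$. Since $(\Z G)^r$ is free and $N$ is projective, Lemma~\ref{injectivitytheorem} applies to $g$ and gives that $1_{Q(\H')^{dt}}\otimes_{\Z G} g$ is also injective. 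Its domain has $Q(\H')$-dimension $dt\cdot r$, so $\dim_{Q(\H')}\Im(1\otimes g)=dt\cdot r$. Since $g=f\circ h$, the image $\Im(1\otimes g)$ is contained in $\Im(1\otimes f)$, which proves the inequality in~(1).

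Part~(2) then follows by rank-nullity over both ground rings. Writing $M=(\Z G)^s$ with $s<\infty$, one has $\Z/q\otimes_{\Z G}M=(\Z/q)^s$ and $Q(\H')^{dt}\otimes_{\Z G} M=Q(\H')^{dt\cdot s}$; subtracting the image dimensions from $s$ and $dt\cdot s$ respectively, the inequality of~(2) is equivalent to $dt\cdot \dim_{\Z/q}\Im(1_{\Z/q}\otimes f)\leq \dim_{Q(\H')}\Im(1\otimes f)$ from part~(1), which is available because the hypothesis that $N$ is projective still holds.

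The main technical point is the choice of $g$ so that Lemma~\ref{injectivitytheorem} can be invoked; once that is in place, the rest is routine linear algebra. I do not anticipate further obstacles, since the delicate passage from $\Z/q$-coefficients to $Q(\H')$-coefficients is entirely absorbed into the injectivity theorem.
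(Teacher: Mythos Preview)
Your proposal is correct and follows essentially the same route as the paper: construct a map $g\colon(\Z G)^r\to N$ landing in $\Im f$ whose $\Z/q$-reduction is injective, apply Lemma~\ref{injectivitytheorem}, and read off the image bound; then deduce~(2) by rank--nullity. The only cosmetic difference is that the paper lifts a basis of $\Im(1_{\Z/q}\otimes f)$ directly to $\Im f\subset N$, whereas you first lift to $M$ and compose with $f$---these produce the same auxiliary map into~$N$.
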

\begin{proof}
(1) Let $k=\dim_{\Z/q}\Im(1_{\Z/q}\mathbin{\mathop{\otimes}_{\mathbb{Z}G}} f)$. ($k$ may be any cardinal number.) Note that $\Z/q\mathbin{\mathop{\otimes}_{\mathbb{Z}G}}-$ induces two surjections $(\Z G)^k\to \Z/q^k$ and $\Im f\to \Im (1_{\Z/q}\mathbin{\mathop{\otimes}_{\mathbb{Z}G}} f)$. Since $(\Z G)^k$ is free, the following diagram commutes:
\[\begin{diagram}
\node{(\Z G)^k}\arrow{e,t,..}{\exists~i}\arrow{s}\node{\Im f}\arrow{s}\\
\node{\Z/q^k}\arrow{e,t}{\cong}\node{\Im (1_{\Z/q}\mathbin{\mathop{\otimes}_{\mathbb{Z}G}} f)}
\end{diagram}
\]
Recall that $N$ is a projective $\Z G$-module. Obviously, $(\Z G)^k$ is a projective $\Z G$-module. Hence, we can apply Lemma \ref{injectivitytheorem} to $i\colon (\Z G)^k\to \Im f\subset N$ and obtain the following injection:
\[1_{Q(\H')^{dt}}\mathbin{\mathop{\otimes}_{\mathbb{Z}G}} i\colon Q(\H')^{dtk}=Q(\H')^{dt}\mathbin{\mathop{\otimes}_{\mathbb{Z}G}} (\Z G)^k\hookrightarrow Q(\H')^{dt}\mathbin{\mathop{\otimes}_{\mathbb{Z}G}} N\]
Since $\Im i\subset \Im f$, 
\[Q(\H')^{dtk}\cong \Im(1_{Q(\H')^{dt}}\mathbin{\mathop{\otimes}_{\mathbb{Z}G}} i)\subset \Im (1_{Q(\H')^{dt}}\mathbin{\mathop{\otimes}_{\mathbb{Z}G}} f).\] This implies 
\[ dt\cdot \dim_{\Z/q}\Im(1_{\Z/q}\mathbin{\mathop{\otimes}_{\mathbb{Z}G}} f)= dtk\leq  \dim_{Q(\H')}(1_{Q(\H')^{dt}}\mathbin{\mathop{\otimes}_{\mathbb{Z}G}} f).\]
(2) Let $M=(\Z G)^n$. (1) and the following elementary observation completes the proof.
\[ \dim_{\Z/q}\Ker (1_{\Z/q}\mathbin{\mathop{\otimes}_{\mathbb{Z}G}} f)+\dim_{\Z/q}\Im(1_{\Z/q}\mathbin{\mathop{\otimes}_{\mathbb{Z}G}} f)=\dim_{\Z/q}\Z/q\mathbin{\mathop{\otimes}_{\mathbb{Z}G}} M=n\]
and similarly,
\[ ndt=\dim_{Q(\H')}\Ker(1_{Q(\H')^{dt}}\mathbin{\mathop{\otimes}_{\mathbb{Z}G}} f) +\dim_{Q(\H')}\Im(1_{Q(\H')^{dt}}\mathbin{\mathop{\otimes}_{\mathbb{Z}G}} f).\]
\end{proof}
\begin{theorem}\label{finalproposition} Suppose $C_*$ is a chain complex of projective left $\Z G$-modules with  $C_n$ finitely generated.  If $\{x_i\}_{i\in I}$ is a collection of $n$-cycles in $C_n$, then for the $(Q(\H)^d\otimes_{\Z K}\Z G)$-span of $\{[1_{Q(\H)^{d}}\mathbin{\mathop{\otimes}_{\mathbb{Z}K}} x_i]\}_{i\in I}$, $M\subset H_n(Q(\H)^{d}\mathbin{\mathop{\otimes}_{\mathbb{Z}K}} C_*)$ and the $\Z/q$-span of $\{[1_{\Z/q}\mathbin{\mathop{\otimes}_{\mathbb{Z}G}}x_i ]\}_{i\in I}$, $\overline{M}\subset H_n(\Z/q\mathbin{\mathop{\otimes}_{\mathbb{Z}G}}C_*)$, we have
\[ \dim_{Q(\H)}H_n(Q(\H)^{d}\mathbin{\mathop{\otimes}_{\mathbb{Z}K}}C_*)/M\leq dt\cdot \dim_{\Z/q}H_n(\Z/q\mathbin{\mathop{\otimes}_{\mathbb{Z}G}}C_*)/\overline{M}.
\]
\end{theorem}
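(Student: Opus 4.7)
\emph{Plan.} The strategy is to encode both quotients in a single modified chain complex, and then to extract the desired inequality from Lemma~\ref{proposition3} via a rank-nullity argument applied to the boundary maps of that complex. First, I would form a new chain complex $C'_*$ from $C_*$ by attaching a free $\Z G$-module in degree $n+1$ to kill the given cycles $x_i$. Concretely, set $F:=\bigoplus_{i\in I}\Z G\cdot e_i$, let $C'_k:=C_k$ for $k\neq n+1$, and put $C'_{n+1}:=F\oplus C_{n+1}$ with boundary $\psi\colon F\oplus C_{n+1}\to C_n$, $\psi(e_i,c):=x_i+d_{n+1}(c)$. Since each $x_i$ is an $n$-cycle, $C'_*$ is again a chain complex of projective left $\Z G$-modules. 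For any right $\Z G$-module $R$, the identity $\Im(1\otimes\psi)=\Im(1\otimes d_{n+1})+(R\text{-span of }\{1\otimes x_i\})$ yields
\[
H_n\bigl(R\otimes_{\Z G}C'_*\bigr)\;\cong\; H_n\bigl(R\otimes_{\Z G}C_*\bigr)\,\Big/\,\bigl(R\text{-span of }\{[1\otimes x_i]\}\bigr).
\]
Applied with $R=\Z/q$, the right-hand side of the theorem becomes $dt\cdot\dim_{\Z/q}H_n(\Z/q\otimes_{\Z G}C'_*)$, and using the associativity $Q(\H)^d\otimes_{\Z K}(-)=(Q(\H)^d\otimes_{\Z K}\Z G)\otimes_{\Z G}(-)$, the left-hand side becomes $\dim_{Q(\H)}H_n(Q(\H)^d\otimes_{\Z K}C'_*)$.

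Next, I would convert the left-hand coefficient system into the form to which Lemma~\ref{proposition3} directly applies. Since $Q(\H)\hookrightarrow Q(\H')$ is a flat field extension, extending scalars is exact and preserves $Q(\H)$-dimension, so
\[
\dim_{Q(\H)}H_n\bigl(Q(\H)^d\otimes_{\Z K}C'_*\bigr)=\dim_{Q(\H')}H_n\bigl(Q(\H')^d\otimes_{\Z K}C'_*\bigr).
\]
A direct comparison of the $K$-action via $(\alpha,\phi)$ with the induction to $G$ via $(\alpha',\phi')$ identifies the right $\Z G$-module $Q(\H')^d\otimes_{\Z K}\Z G$ with the induced representation $Q(\H')^{dt}$ of $(\ast\ast)$, so the left-hand side equals $\dim_{Q(\H')}H_n(Q(\H')^{dt}\otimes_{\Z G}C'_*)$. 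It therefore suffices to prove
\[
\dim_{Q(\H')}H_n\bigl(Q(\H')^{dt}\otimes_{\Z G}C'_*\bigr)\;\leq\; dt\cdot\dim_{\Z/q}H_n\bigl(\Z/q\otimes_{\Z G}C'_*\bigr).
\]

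Finally, this reduced inequality drops out of Lemma~\ref{proposition3} by rank-nullity. Writing $H_n=\Ker(1\otimes d_n)/\Im(1\otimes\psi)$ for each coefficient system, I would apply Lemma~\ref{proposition3}(1) to $\psi\colon F\oplus C_{n+1}\to C_n$ (whose codomain is projective) to obtain
\[
dt\cdot\dim_{\Z/q}\Im(1\otimes\psi)\;\leq\;\dim_{Q(\H')}\Im(1\otimes\psi),
\]
and apply Lemma~\ref{proposition3}(2) to $d_n\colon C_n\to C_{n-1}$ (after enlarging $C_n$ by a complementary projective summand, if necessary, so the source is finitely generated free) to obtain
\[
dt\cdot\dim_{\Z/q}\Ker(1\otimes d_n)\;\geq\;\dim_{Q(\H')}\Ker(1\otimes d_n).
\]
Subtracting the first inequality from the second and invoking rank-nullity gives the bound. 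The step demanding the most care, I expect, is the coefficient identification: verifying that $Q(\H')^d\otimes_{\Z K}\Z G$ genuinely realises the induced representation $Q(\H')^{dt}$ as a right $\Z G$-module, so that Lemma~\ref{proposition3} applies verbatim. The projective-versus-free gap in Lemma~\ref{proposition3}(2) is a minor technicality that a stabilisation absorbs, since the added summand contributes equally to both dimensions.
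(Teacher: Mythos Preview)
Your proposal is correct and mirrors the paper's proof essentially step for step: the paper defines the same auxiliary map $f=\psi\colon(\Z G)^I\oplus C_{n+1}\to C_n$, performs the same coefficient change $Q(\H)\to Q(\H')$ via flatness and the induced-representation identification $Q(\H')\otimes_{Q(\H)}Q(\H)^d\otimes_{\Z K}\Z G\cong Q(\H')^{dt}$, and then subtracts the two inequalities of Lemma~\ref{proposition3}. Your stabilisation remark for the projective-versus-free hypothesis in Lemma~\ref{proposition3}(2) is in fact a detail the paper passes over silently.
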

\begin{proof}Let $\partial_n\colon C_n\to C_{n-1}$ be the boundary map of $C_*$ and define $f\colon (\Z G)^I\oplus C_{n+1}\to C_n$ by $(e_i,v)\mapsto x_i+ \partial_{n+1}(v)$, where $\{e_i\}_{i\in I}$ is the standard basis of $(\Z G)^I$. Then, 
\[ H_n(Q(\H)^{d}\mathbin{\mathop{\otimes}_{\mathbb{Z}K}} C_*)/M=\Ker(1_{Q(\H)^{d}} \mathbin{\mathop{\otimes}_{\mathbb{Z} K}}\partial_n )/\Im(1_{Q(\H)^{d}}\mathbin{\mathop{\otimes}_{\mathbb{Z} K}} f )\textrm{~and~}
\]
\[H_n(\Z/q\mathbin{\mathop{\otimes}_{\mathbb{Z}G}}C_*)/\overline{M}=\Ker(1_{\Z/q}\mathbin{\mathop{\otimes}_{\mathbb{Z}G}} \partial_n)/\Im(1_{\Z/q}\mathbin{\mathop{\otimes}_{\mathbb{Z}G}}f).
\]
From the $\Z G$-module structure on $Q(\H')^{dt}$ in \eqref{equation:induced-representation},
 \[ Q(\H')\mathbin{\mathop{\otimes}_{Q(\H)}}  Q(\H)^d\mathbin{\mathop{\otimes}_{\Z K}}\Z G = (Q(\H')\mathbin{\mathop{\otimes}_{Q(\H)}}  Q(\H)^d\mathbin{\mathop{\otimes}_{\Z K}}\Z G) \mathbin{\mathop{\otimes}_{\Z G}}\Z G=Q(\H')^{dt}\mathbin{\mathop{\otimes}_{\Z G}}\Z G.\]
 Since $C_*$ is a chain complex of left $\Z G$-modules,  \[Q(\H')\mathbin{\mathop{\otimes}_{Q(\H)}} Q(\H)^d\mathbin{\mathop{\otimes}_{\Z K}} C_*=Q(\H')^{dt}\mathbin{\mathop{\otimes}_{\Z G}}C_*.\]
Since $\H\hookrightarrow \H'$, $Q(\H')$ is a flat right $Q (\H)$-module. Therefore, we have
 \[H_*(Q(\H')^{dt}\mathbin{\mathop{\otimes}_{\Z G}}C_*)=Q(\H')\mathbin{\mathop{\otimes}_{Q(\H)}} H_*(Q(\H)^d\mathbin{\mathop{\otimes}_{\Z K}} C_*).\]
Combining these, we obtain 
\[\dim_{Q(\H)}H_n(Q(\H)^{d}\mathbin{\mathop{\otimes}_{\mathbb{Z}K}}C_*)/M=\dim_{Q(\H')}H_n(Q(\H')^{dt}\mathbin{\mathop{\otimes}_{\mathbb{Z}G}} C_*)/(Q(\H')\mathbin{\mathop{\otimes}_{Q(\H)}}M)\]
and 
\[H_n(Q(\H')^{dt}\mathbin{\mathop{\otimes}_{\mathbb{Z}G}} C_*)/(Q(\H')\mathbin{\mathop{\otimes}_{Q(\H)}}M)=\Ker(1_{Q(\H')^{dt}} \mathbin{\mathop{\otimes}_{\mathbb{Z} G}}\partial_n )/\Im( 1_{Q(\H')^{dt}}\mathbin{\mathop{\otimes}_{\mathbb{Z} G}} f ).\]
From the above observations and the inequality from Lemma \ref{proposition3},
\begin{eqnarray*}
&&\dim_{Q(\H)}H_n(Q(\H)^d\mathbin{\mathop{\otimes}_{\mathbb{Z}K}}C_*)/M\\
&=&\dim_{Q(\H')}H_n(Q(\H')^{dt}\mathbin{\mathop{\otimes}_{\mathbb{Z}G}} C_*)/(Q(\H')\mathbin{\mathop{\otimes}_{Q(\H)}}M)\\
&=&\dim_{Q(\H')}\Ker(1_{Q(\H')^{dt}}\mathbin{\mathop{\otimes}_{\mathbb{Z}G}}\partial_n)-\dim_{Q(\H')}\Im(1_{Q(\H')^{dt}}\mathbin{\mathop{\otimes}_{\mathbb{Z}G}} f)\\
&\leq& dt(\dim_{\Z/q}\Ker (1_{\Z/q} \mathbin{\mathop{\otimes}_{\mathbb{Z}G}}\partial_n)-\dim_{\Z/q}\Im (1_{\Z/q}\mathbin{\mathop{\otimes}_{\mathbb{Z}G}} f))\\
&=&dt\cdot \dim_{\Z/q} H_n(\Z/q\mathbin{\mathop{\otimes}_{\mathbb{Z}G}}C_*)/\overline{M}.\end{eqnarray*}
This completes the proof.
\end{proof}

\section{$h$-solvable cobordism}\label{solvablecobordism}
In this section, we give the definition of an $h$-solvable cobordism following \cite{Cha:2012-1}. Also, we prove Proposition \ref{proposition1} about prime power covering of 1-solvable cobordism.  
\subsection{Definition of $h$-solvable cobordism}\label{subsection:definition-solvable-cobordism}
For oriented compact bordered 3-manifolds $M$ and $M'$ with a chosen homeomorphism $\partial M\cong \partial M'$, a \emph{cobordism $W$ between $M$ and $M'$} is a $4$-dimensional manifold with boundary $\partial W=M\cup_\partial -M'$, where $-M'$ denotes $M'$ with reversed orientation. We often denote a cobordism by $(W;M,M')$. A cobordism $(W;M,M')$ is an \emph{$H_1$-cobordism} (resp.\ a \emph{homology cobordism}) if $H_i(M;\Z)\cong H_i(W;\Z)\cong H_i(M';\Z)$ under the inclusion map for $i\leq 1$ (resp.\ for all $i$). Note  that $H_2(W,M)$ is a free abelian group if $(W;M,M')$ is an $H_1$-cobordism (for example, see \cite[Lemma 3.7]{Cha:2010-01}).
\begin{example0} If $L$ is a link in $S^3$, then the link exterior $X_L$ is a bordered 3-manifold with a canonical homeomorphism between disjoint union of tori and $\partial X_L$ sending standard basis to the meridians and $0$-framed longitudes of $L$. If two links $L$ and $L'$ are concordant, then $X_L$ and $X_{L'}$ are homology cobordant bordered 3-manifolds via a concordance exterior and $h$-solvable cobordant for all $h\in \frac{1}{2}\Z_{\geq 0}$ (see the definition of solvable cobordism given in~below).
\end{example0}
We use the following notation for covering maps associated to the derived series.
\\
\begin{convention}~
\begin{enumerate}
\item For a space $X$, there is a sequence of regular covers over $X$ 
\[ X^{(n+1)}\to X^{(n)}\to \cdots \to X^{(1)}\to X^{(0)}=X\]
which corresponds to the derived series 
\[ \pi^{(n+1)}\subset \pi^{(n)}\subset\cdots\subset \pi^{(0)}=\pi, \textrm{~where~} \pi=\pi_1(X) \textrm{~and~} \pi^{(n+1)}=[\pi^{(n)},\pi^{(n)}].\] 
With this, we can always identify $H_*(X;\Z[\pi/\pi^{(n)}])=H_*(X^{(n)};\Z)$ as usual.
\item For a 4-manifold $W$ with $\pi=\pi_1(W)$, let
\[ \lambda_n\colon H_2(W;\Z[\pi/\pi^{(n)}])\times H_2(W;\Z[\pi/\pi^{(n)}])\to \Z[\pi/\pi^{(n)}]\] be the inetersection form.
\item For a covering map $Y\to X$, $\Cov(Y|X)$ denotes its deck transformation group. Assume that the action of $\Cov(Y|X)$ on $H_*(Y;\Z)$ is a right action.
\end{enumerate}
\end{convention}
\begin{definition}Suppose $(W;M,M')$ is an $H_1$-cobordism between bordered 3-manifolds $M$ and $M'$ with $\pi=\pi_1(W)$. Let $r=\frac{1}{2}\rank H_2(W,M;\Z)$. 
\begin{enumerate}
\item A submodule $L\subset  H_2(W;\Z[\pi/\pi^{(n)}])$ is an \emph{$n$-lagrangian} if $L$ projects to a half-rank summand of $H_2(W,M;\Z)$ and $\lambda_n$ vanishes on $L$.
\item For an $n$-lagrangian $L$ $(k\leq n$), homology classes $d_1,\ldots, d_r\in H_2(W;\Z[\pi/\pi^{(k)}])$ are \emph{$k$-duals} if $L$ is generated by $l_1,\ldots,l_r\in L$ whose projections $l_1',\ldots,l_r'\in H_2(W;\Z[\pi/\pi^{(k)}])$ satisfy $\lambda_k(l_i',d_j)=\delta_{ij}$. 
\item An $H_1$-cobordism $(W;M,M')$ is called an \emph{$n.5$-solvable cobordism} (resp.\ \emph{$n$-solvable cobordism}) if it has an $(n+1)$-lagrangian (resp.\ $n$-lagrangian) admitting $n$-duals. If there exists an $h$-solvable cobordism from $M$ to $M'$, we say that $M$ is $h$-solvable cobordant to $M'$ for $h\in\frac{1}{2}\Z_{\geq 0}$.
\end{enumerate}
\end{definition}
\subsection{Prime power cover of 1-solvable cobordism}\label{technicallemmas}
In this subsection, we prove Proposition \ref{proposition1} about (abelian) prime power cover of 1-solvable cobordism for later purpose.
\begin{proposition}\label{proposition1} Suppose that $(W;M,M')$ is a 1-solvable cobordism with $\varphi\colon \pi_1W\to A$ be a surjective group homomorphism to an abelian $p$-group $A$ and $p$ is prime. We denote the cobordism of the induced coverings by $(W^\varphi;M^\varphi,M'^\varphi)$. Then, 
\begin{enumerate}
\item $\beta_2(W^\varphi,M^\varphi)=|A|\beta_2(W,M)$ where $\beta_2$ is the second Betti number.
\item The inclusion induced map $FH_2(W^\varphi;\Z)\to FH_2(W^\varphi,M^\varphi;\Z)$ is surjective.  
\item  $(W^\varphi;M^\varphi,M'^\varphi)$ is an $H_1$-cobordism with $\Q$-coefficients.
\end{enumerate}
Here, for a finitely generate abelian group $G$, $FG$ denotes the free part of $G$.
\end{proposition}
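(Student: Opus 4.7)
My plan is to prove (3) first and then deduce (1) and (2). For (1), the $H_1$-cobordism hypothesis gives $H_0(W,M) = H_1(W,M) = 0$, and Poincar\'e--Lefschetz duality $H_k(W,M) \cong H^{4-k}(W,M')$ combined with the symmetric vanishing on $(W,M')$ yields $H_3(W,M) = H_4(W,M) = 0$, so $\chi(W,M) = \beta_2(W,M)$. Multiplicativity of Euler characteristic under finite covers gives $\chi(W^\varphi, M^\varphi) = |A|\beta_2(W,M)$; once (3) is established for both $M$ and $M'$, the same duality argument on the cover forces $\chi(W^\varphi, M^\varphi) = \beta_2(W^\varphi, M^\varphi)$, proving (1). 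For (2), the long exact sequence of $(W^\varphi, M^\varphi)$ identifies $\Coker(H_2(W^\varphi) \to H_2(W^\varphi, M^\varphi))$ with $\Ker(H_1(M^\varphi) \to H_1(W^\varphi))$, which is torsion by (3), so the map is surjective on free parts.

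For (3), the $H_0$ part is immediate: $\varphi$ factors through $H_1(W)$, and since $H_1(M) \xrightarrow{\cong} H_1(W)$, the composite $\pi_1(M) \to A$ is surjective and $M^\varphi$ is connected; likewise for $M'^\varphi$ and $W^\varphi$.

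The $H_1$ part over $\Q$ is the core of the argument, and my approach combines two ingredients. First, a devissage over $\mathbb{F}_p A$: since $A$ is a finite abelian $p$-group, $\mathbb{F}_p A$ is a local ring with nilpotent augmentation ideal and residue field $\mathbb{F}_p$, admitting a composition series whose subquotients are all isomorphic to the trivial module $\mathbb{F}_p$. The $H_1$-cobordism hypothesis gives $H_i(W, M; \mathbb{F}_p) = 0$ for $i \leq 1$ (by UCT from the integral vanishing), and an induction along the composition series using the long exact sequences in coefficients propagates this to $H_i(W, M; \mathbb{F}_p A) = H_i(W^\varphi, M^\varphi; \mathbb{F}_p) = 0$ for $i \leq 1$, and symmetrically for $M'$. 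Since $H_0(W^\varphi, M^\varphi; \Z) = 0$, UCT forces $H_1(W^\varphi, M^\varphi; \Z)/p = 0$, so $H_1(W^\varphi, M^\varphi; \Z)$ is $p$-divisible and hence, being finitely generated, torsion of order prime to $p$; in particular $\beta_1(W^\varphi, M^\varphi; \Q) = 0$. By symmetry and Poincar\'e--Lefschetz, $\beta_3(W^\varphi, M^\varphi; \Q) = \beta_1(W^\varphi, M'^\varphi; \Q) = 0$, and the rational Euler characteristic identity then pins down $\beta_2(W^\varphi, M^\varphi; \Q) = 2r|A|$.

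The second ingredient is the 1-solvability structure: the 1-lagrangian $L = \langle l_1,\ldots,l_r\rangle$ with its 1-duals $\{d_j\}$ in $H_2(W; \Z H)$ push forward, via the ring map $\Z H \to \Z A$ induced by $\bar\varphi: H \to A$, to classes $\{\bar l_i, \bar d_j\} \subset H_2(W^\varphi; \Z)$ with $\lambda_A(\bar l_i, \bar d_j) = \delta_{ij}$ and $\lambda_A(\bar l_i, \bar l_j) = 0$. Since $\Im(H_2(M^\varphi) \to H_2(W^\varphi))$ lies in the radical of the intersection form, the hyperbolic pairing makes $\{\bar l_i, \bar d_j\}$ $\Z A$-linearly independent modulo $\Im(H_2(M^\varphi))$, so they contribute a $\Q$-subspace of $H_2(W^\varphi, M^\varphi; \Q)$ of dimension $2r|A|$; by the first ingredient this exhausts $H_2(W^\varphi, M^\varphi; \Q)$. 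As these generators are absolute classes, the connecting map $\partial: H_2(W^\varphi, M^\varphi; \Q) \to H_1(M^\varphi; \Q)$ vanishes on them and hence identically, yielding the injectivity of $H_1(M^\varphi; \Q) \to H_1(W^\varphi; \Q)$; surjectivity already follows from $H_1(W^\varphi, M^\varphi; \Q) = 0$. The main difficulty, I expect, is orchestrating the two ingredients so that the $H_2$-rank is sharp---the devissage alone gives $\beta_1 = \beta_3 = 0$ but not injectivity of the $H_1$-map, while the 1-solvability supplies the spanning set whose absoluteness kills the connecting map.
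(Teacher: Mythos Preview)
Your arguments for (1) and (3) are correct and use the same ingredients as the paper: your $\mathbb{F}_pA$ devissage is Levine's chain homotopy lifting argument, and your use of the 1-lagrangian and 1-dual classes to span $H_2(W^\varphi, M^\varphi; \Q)$ and annihilate the connecting map is the content of the paper's proofs of (2) and (3). The paper orders the parts (1), (2), (3); your reordering with (3) first is harmless.

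Your deduction of (2), however, has a genuine gap. You argue that the cokernel of $H_2(W^\varphi; \Z) \to H_2(W^\varphi, M^\varphi; \Z)$ is torsion (correct, by your (3)) and conclude that the induced map on free parts is surjective. This implication is false in general: multiplication by $2$ on $\Z$ has torsion cokernel $\Z/2$, yet the map on free parts $\Z \xrightarrow{\times 2} \Z$ is not surjective. Rational surjectivity does not promote to integral surjectivity on free quotients.

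The repair is already implicit in your proof of (3). You have $2r|A|$ explicit classes $\bar l_i \cdot g$, $\bar d_j \cdot g$ in $H_2(W^\varphi; \Z)$ whose images in $FH_2(W^\varphi, M^\varphi; \Z)$ pair against their images in $FH_2(W^\varphi, M'^\varphi; \Z)$ via the matrix $\begin{pmatrix} 0 & I \\ I & * \end{pmatrix}$ of determinant $\pm 1$. The Poincar\'e--Lefschetz pairing $FH_2(W^\varphi, M^\varphi) \times FH_2(W^\varphi, M'^\varphi) \to \Z$ is unimodular (by duality and the universal coefficient theorem), and both lattices have rank $2r|A|$ by (1); a unimodular pairing on lattices of this rank together with a determinant-$\pm 1$ submatrix forces these classes to be a $\Z$-basis of $FH_2(W^\varphi, M^\varphi; \Z)$. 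Since they lie in the image of $FH_2(W^\varphi; \Z)$, (2) follows. This is exactly the paper's argument for (2).
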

\begin{proof} (1) Fix a (relative) CW-complex structure of $(W,M)$. This induces a (relative) CW-complex structure of $(W^\varphi,M^\varphi)$. Let $C_*=C_*(W^\varphi,M^\varphi;\Z)$. Then, $C_*$ is a chain complex of free $\Z A$-modules and $C_*(W,M;\Z)=C_*\otimes_{\Z A} \Z$. Since $(W;M,M')$ is an $H_1$-cobordism, $H_i(C_*\otimes_{\Z A}\Z/p)=0\textrm{~for~}i=0,1$ by universal coefficient theorem. Since $p$ is prime, the well-known Levine's chain homotopy lifting argument in \cite{Levine:1994-1} shows that $H_i(C_*\otimes_\Z \Z/p)=0$ for $i=0,1$.  In particular, by universal coefficient theorem, $H_i(C_*)$ is a torsion abelian group for $i=0,1$. By universal coefficient theorem, $H_i(W^\varphi,M^\varphi;\Q)=H_i(C_*)\otimes_{\Z}\Q=0$ for $i=0,1$. 

By taking $C_*=C_*(W^\varphi,M'^\varphi;\Z)$, the same argument shows $H_i(W^\varphi,M'^\varphi;\Q)=0$ for $i=0,1$. By Poincar\'{e} duality and universal coefficient theorem, 
\[H_i(W^\varphi,M^\varphi;\Q)\cong\Hom_\Q(H_{4-i}(W^\varphi,M'^\varphi;\Q),\Q)=0\textrm{~for~}i=3,4.\] So, $\beta_2(W^\varphi,M^\varphi)=\chi(W^\varphi,M^\varphi)$ where $\chi$ is the Euler characteristic. Similarly,  $\chi(W,M)=\beta_2(W,M)$ because $H_i(W,M;\Z)=H_i(W,M';\Z)=0$ for $i=0,1$. By definition, $(W^\varphi,M^\varphi)$ is an $A$-cover of $(W,M)$ and $\chi(W^\varphi,M^\varphi)=|A|\chi (W,M)$. This completes the proof of (1).

(2) Since $W^\varphi\to W$ is an abelian covering with $\Cov(W^\varphi|W)=A$,  $\pi_1(W)^{(1)}\subset \pi_1(W^\varphi)$. The covering map $W^{(1)}\to W^\varphi$ induces $H_2(W^{(1)};\Z)\to H_2(W^\varphi;\Z)$. Let  $l_1,\ldots,l_r,d_1,\ldots,d_r$ be the images of the (generators of) $1$-lagrangian and $1$-duals in $H_2(W^\varphi;\Z)$. By the definition of 1-solvable cobordism, $\beta_2(W,M)=2r$. Let $A=\{g_1,\ldots,g_t\}$.  From (1), $\beta_2(W^\varphi,M^\varphi)=\beta_2(W,M)|A|=2rt$.

From the (right) group action of $A$ on $H_2(W^\varphi;\Z)$, we can define
\[ l_{ij}=l_i\cdot g_j\textrm{~and~} d_{kl}=d_k\cdot g_l \textrm{~for~} 1\leq i,k\leq r\textrm{~and~}1\leq j,l\leq t.\] By the definition of 1-Lagrangian and 1-duals, the intersection pairing $\lambda\colon FH_2(W^\varphi,M^\varphi;\Z)\times FH_2(W^\varphi,M'^\varphi;\Z)\to \Z$ restricted to the $\Z$-span of (the image of) $\{l_{ij},d_{kl}\}$ is \[\begin{pmatrix}
        0&I_{rt\times rt}\\
        I_{rt\times rt}&X
     \end{pmatrix}\]
By rank counting, the image of $\{l_{ij},d_{kl}\}$ form a $\Z$-basis of $FH_2(W^\varphi,M^\varphi;\Z)$. This proves that $\operatorname{inc}_*\colon FH_2(W^\varphi;\Z)\xrightarrow{} FH_2(W^\varphi,M^\varphi;\Z)$ is surjective because $\{l_{ij},d_{kl}\}\subset FH_2(W^\varphi;\Z)$.

(3) From (2), $\operatorname{inc}_*\colon H_2(W^\varphi;\Q)\to H_2(W^\varphi,M^\varphi;\Q)$ is surjective.  From (1) and the homology long exact sequence of a pair $(W^\varphi,M^\varphi)$, $\operatorname{inc}_*\colon H_i(M^\varphi;\Q)\to H_i(W^\varphi;\Q)$ is an isomorphism for $i=0,1$. Same argument works for $(W,M')$. This completes the proof.

\end{proof}

\section{Solvable cobordism and Friedl-Powell invariant}\label{existenceofmetabolizer} 
Throughout this section, for any finitely generated abelian group $G$, $tG$ and $FG$ denote the torsion part of $G$ and free part of $G$, respectively.  $G^\wedge$ denotes $\Hom_\Z(G,\Q/\Z)$. For a finite abelian group $G$, $G^\wedge=\Ext_\Z(G,\Z)$ since $\Hom_\Z(G,\Q)=\Ext_\Z(G,\Q)=0$. $H_*(-)$ denotes homology with integral coefficients.

\subsection{Definition of the Friedl-Powell invariant $\tau(L,\chi)$}
To define the Friedl-Powell invariant $\tau(L,\chi)$, we set up notations and conventions used in \cite{Friedl-Powell:2011-1}. Here, $L$ is a 2-component link with linking number 1 and $H$ is the Hopf link. We denote the exterior of $J$ by $X_J=S^3-\nu(J)$  for $J=H,L$.  We can decompose $\partial X_L$ into $Y_a\cup Y_b$ with $Y_a\cong Y_b=S^1\times D^1\,\sqcup\,S^1\times D^1$ where both $Y_a$ and $Y_b$ are annuli neighborhood of (parallels of) meridians of  $L$. Define $M_J=X_J\,\cup_{\partial X_H\times I}\, -X_H$ for $J=H,L$ where the gluing map respects the ordering of the link components and identifies each of the subsets $Y_a, Y_b\subset \partial X_J$ for $J=L,H$. 

For a prime $p$, we say a group homomorphism $\varphi\colon H_1(M_L)\to \Z/p^i\oplus \Z/p^j$ is \emph{admissible} if $\varphi$ sends two meridians of $L$ to the standard basis $(1,0), (0,1)$. (From the Mayer-Vietoris sequence, $H_1(M_L)\cong H_1(X_L)\+ \Z\cong \Z^3$.) Let $M_L^\varphi\to M_L^{\vphantom{\varphi}}$ be the $p^{i+j}$-fold covering space from $\varphi$. We denote the Hurewicz map by $\phi'\colon \pi_1(M_L)\to H_1(M_L)$. Define $\phi\colon \pi_1(M_L^\varphi)\to H_1(M_L^{\vphantom{\varphi}})$ be the restriction $\phi'|_{\pi_1(M_L^\varphi)}$ and $\H=\Im{\phi}$. Choose an isomorphism $\psi\colon \pi_1(T^3)\cong \H$. (Note that $\H$ is isomorphic to $\Z^3$ as a finite index subgroup of $H_1(M_L)\cong \Z^3$.)

For a prime power character $\chi\colon \pi_1(M_L^\varphi)\to \Z/q^k$, we have the bordism class 
\[[(M_L^\varphi,\chi\times \phi)\sqcup -(T^3,\operatorname{tr}\times \psi)]\in \Omega_3(\Z/q^k\times \H)\] where $\operatorname{tr}\colon \pi_1(T^3)\to \Z/q^k$ is the trivial group homomorphism. From the Atiyah-Hirzebruch spectral sequence calculation given in \cite[Section 3.2]{Friedl-Powell:2011-1}$,[(M_L^\varphi,\chi\times \phi)\,\sqcup\, -(T^3,\operatorname{tr}\times \psi)]$ is $q$-primary torsion in $\Omega_3(\Z/q^k\times \H)$. In other words, there exist a non-negative integer $s$, a cobordism $W$ between $q^sM_L^\varphi$ and $q^{s}T^3$, and $\Phi\colon \pi_1(W)\to \Z/q^k\times\H$ such that the following diagram commutes:
\[
\begin{diagram}\dgARROWLENGTH=2.5em
      \node{\bigsqcup^{q^{s}}\,(M_L^\varphi\,\sqcup\,- T^3)}\arrow{s,l,J}{\partial}\arrow[3]{e,t}{\chi\times \phi\,\sqcup\, \operatorname{tr}\times \psi}\node[3]{K(\Z/q^k\times\H,1)} 
        \\
      \node{W} \arrow{neee,b}{\Phi} 
    \end{diagram}
\]
From the following sequence of ring homomorphisms,
\[\Z[\pi_1(W)]\xrightarrow{\Phi} \Z[\Z/q^k\times \H]=\Z[\Z/q^k][\H]\to \Q(\xi_{q^k})(\H)\to \mathbb{C}(\H)=\mathcal{K}\]
we can define the  twisted intersection form $H_2(W;\K)\times H_2(W;\K)\to \K$. We denote the non-singular part of the intersection form on $H_2(W;\K)$ (resp.\ on $H_2(W;\Q)$) by $\lambda_\K(W)$ (resp.\ $\lambda_\Q(W)$). 
\begin{definition}[Friedl-Powell invariant]\label{definition:Friedl-Powell-invariant}\[\tau(L,\chi)=(\lambda_\K(W)-\K\otimes \lambda_\Q(W))\otimes\frac{1}{q^s}\in L^0(\K)\mathbin{\mathop{\otimes}_{\Z}}\Z[1/q]\]
where $L^0(\K)$ is the Witt group of finite dimensional non-singular sesquilinear forms over $\K$.
\end{definition}
In \cite[Section 3.2]{Friedl-Powell:2011-1}, it is shown that $\tau(L,\chi)$ is well-defined. That is,  $\tau(L,\chi)$ depend neither on the choice of $W$ nor on the choice of isomorphism $\psi\colon \pi_1(T^3)\to \H$.

In Section \ref{subsection:metabolizer}, we will describe the linking form 
\[\lambda_L\colon tH_1(X_L^\varphi,Y_a^\varphi)\times tH_1(X_L^\varphi,Y_a^\varphi)\to \Q/\Z\]
Now, we give the precise statement that \emph{the Friedl-Powell invariant vanishes}.
\begin{definition}\label{definition:vanishing-Friedl-Powell-invariant}For 2-component link $L$ with linking number 1, we say \emph{the Friedl-Powell invariant vanishes for $L$} if for any admissible homomorphism $\varphi\colon H_1(M_L)\to \Z/p^i\otimes \Z/p^j$ and for a prime $p$, there exists a metabolizer $P=P^\perp$ of the linking form 
\[\lambda_L\colon tH_1(X_L^\varphi,Y_a^\varphi)\times tH_1(X_L^\varphi,Y_a^\varphi)\to \Q/\Z\] with the following property:  
for any character of prime power order $\chi\colon H_1(M_L^\varphi)\to \Z/q^k$ which satisfies that $\chi|_{H_1(X_L^\varphi)}$ factors through
\[
\begin{diagram}\dgARROWLENGTH=2.5em
      \node{H_1(X_L^\varphi)}\arrow{s}\arrow[2]{e,t}{ \chi|_{H_1(X_L^\varphi)}}\node[2]{\Z/q^k} 
        \\
      \node{H_1(X_L^\varphi,Y_a^\varphi)} \arrow{nee,b}{\delta} 
    \end{diagram}
\]
and that $\delta$ vanishes on $P$,  $\tau(L;\chi)=0\in L^0(\K)\otimes_\Z\Z[1/q]$.
\end{definition}
The following main theorem will be proved in Section \ref{subsection:proof-of-main-theorem}:
\begin{theoremA}\label{maintheorem} Suppose that $L$ is a 2-component link  with linking number 1 and $H$ is the Hopf link. If $X_L$ and $X_H$ are 1.5-solvable cobordant, then the Friedl-Powell invariant $\tau(L,\chi)$ vanishes for $L$ in the sense of Definition \ref{definition:vanishing-Friedl-Powell-invariant}. In particular, the conclusion holds if $L$ and $H$ are height 3.5 Whitney tower/grope concordant. 
\end{theoremA}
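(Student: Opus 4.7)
The plan is to mimic the classical Casson-Gordon/Cochran-Orr-Teichner argument for knots, adapted to 2-component links with linking number one via the $h$-solvable cobordism machinery. Starting from a 1.5-solvable cobordism $(V;X_L,X_H)$ and noting that $X_H\cong T^2\times I$ forces $M_H\cong T^3$, I would glue $-X_H\times I$ along the common boundary to obtain a 4-dimensional cobordism $(W;M_L,T^3)$. A Mayer-Vietoris computation shows that $W$ inherits the $H_1$-cobordism property of $V$, so the admissible character $\varphi\colon H_1(M_L)\to\Z/p^i\oplus\Z/p^j$ extends uniquely to $H_1(W)$. Form the induced $p$-group cover $W^\varphi\to W$ restricting to $M_L^\varphi$ and $(T^3)^\varphi$ on the boundary. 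Proposition~\ref{proposition1} then implies that $W^\varphi$ is a rational $H_1$-cobordism and that the inclusion-induced map $FH_2(W^\varphi)\to FH_2(W^\varphi,M_L^\varphi)$ is surjective, with the lifts of the 1-lagrangian and 1-duals of $V$ providing a $\Z$-basis of the free part of $H_2(W^\varphi,M_L^\varphi)$.

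Next, I would extract the metabolizer $P\subset tH_1(X_L^\varphi,Y_a^\varphi)$ from the 2-lagrangian $L\subset H_2(V;\Z[\pi_1(V)/\pi_1(V)^{(2)}])$ guaranteed by 1.5-solvability. Pulling $L$ back to the corresponding second-derived cover of $V^\varphi$, applying the boundary map into the first homology of $\partial V^\varphi$, and projecting to $tH_1(X_L^\varphi,Y_a^\varphi)$ yields a half-rank submodule $P$. The self-annihilation $P\subset P^\perp$ for the linking form $\lambda_L$ follows from the intersection-form vanishing on $L$, while the reverse containment $P\supset P^\perp$ uses Proposition~\ref{proposition1}(2) to ensure that the projection of $L$ captures the full ``half'' of the rank of the linking form.

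Finally, suppose $\chi\colon H_1(M_L^\varphi)\to\Z/q^k$ is a prime-power character whose restriction to $H_1(X_L^\varphi)$ factors through a map $\delta\colon H_1(X_L^\varphi,Y_a^\varphi)\to\Z/q^k$ vanishing on $P$. The vanishing-on-$P$ condition ensures that $\chi\times\phi$ extends to a homomorphism $\Phi\colon\pi_1(W^\varphi)\to\Z/q^k\times\H$ (after taking $q^s$ disjoint copies to kill the $q$-primary bordism obstruction in the Friedl-Powell setup). Using $(W^\varphi,\Phi)$ to compute $\tau(L,\chi)$, I would apply Theorem~\ref{finalproposition} to the chain complex $C_*(W^\varphi;\Z)$ with the 2-lagrangian $L$ playing the role of the cycle collection $\{x_i\}$. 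The resulting bound, combined with Proposition~\ref{proposition1}(2) to control the $\Z/q$-quotient, forces $L$ to generate a maximal self-annihilating submodule of $H_2(W^\varphi;\K)$ for the twisted intersection form $\lambda_\K(W^\varphi)$ of full rank equal to $\dim_\Q H_2(W^\varphi;\Q)$. Hence $\lambda_\K(W^\varphi)$ is metabolic and cancels $\K\otimes\lambda_\Q(W^\varphi)$ in $L^0(\K)\otimes\Z[1/q]$, giving $\tau(L,\chi)=0$.

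The main obstacle is verifying $P=P^\perp$ in the metabolizer construction. This step threads Proposition~\ref{proposition1}(2) through Poincar\'e-Lefschetz duality on a 4-manifold with disconnected boundary, while carefully tracking the change of coefficient systems (second-derived for the 2-lagrangian, first-derived for the boundary linking form). The descent of duality statements from $V$ to the finite cover $V^\varphi$, and the matching of admissibility conditions on $\chi$ relative to $P$ with the extendability of $\chi\times\phi$ over $\pi_1(W^\varphi)$, will together constitute the technical core of the proof.
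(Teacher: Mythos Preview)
Your overall architecture matches the paper's: build $W$ from the solvable cobordism $V$ by gluing $X_H\times I$, pass to the $\varphi$-cover, use Proposition~\ref{proposition1} to control $H_2(W^\varphi,M_L^\varphi)$, produce a metabolizer $P$, extend $\chi$ over $W^\varphi$, and then use Theorem~\ref{finalproposition} to show that the lifts of the $2$-lagrangian span a half-dimensional isotropic subspace of $H_2(W^\varphi;\K)$. The dimension count at the end is exactly how the paper proceeds.

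However, your metabolizer construction does not match the paper's, and as written it does not work. You propose to obtain $P$ by pushing the $2$-lagrangian $L\subset H_2(V;\Z[\pi/\pi^{(2)}])$ to the second-derived cover of $V^\varphi$ and then ``applying the boundary map into the first homology of $\partial V^\varphi$''. But the elements of $L$ are absolute $2$-cycles, so any boundary/connecting map sends them to zero; there is no nontrivial submodule of $tH_1(X_L^\varphi,Y_a^\varphi)$ produced this way. The paper instead defines $P=\Ker\bigl(tH_1(X_L^\varphi,Y_a^\varphi)\to tH_1(V^\varphi,Y_a^\varphi)\bigr)$ (Proposition~\ref{proposition:metabolizer}), using only the $1$-solvable structure: the proof is a diagram chase with Poincar\'e duality and the long exact sequence of the triple $(V^\varphi,X_L^\varphi,Y_a^\varphi)$, with Proposition~\ref{proposition1}(2) supplying exactness of the torsion row. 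This choice of $P$ as a kernel is also precisely what makes the extension of $\chi$ to $H_1(W^\varphi)$ go through (via the Friedl--Powell argument), so your later step ``the vanishing-on-$P$ condition ensures that $\chi\times\phi$ extends'' depends on getting this right.

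Two further points. First, after extending $\chi$ over $W^\varphi$ the resulting $(W^\varphi,\chi\times\phi)$ is a cobordism from $(M_L^\varphi,\chi\times\phi)$ to $(T^3,\chi|_{T^3}\times\psi)$, not to $(T^3,\operatorname{tr}\times\psi)$; the paper glues on an auxiliary piece $W_\chi$ (from \cite[Lemma~3.4]{Friedl-Powell:2011-1}) with Witt-trivial forms to trivialise the character on the $T^3$ end. Your mention of ``$q^s$ disjoint copies'' is the generic Friedl--Powell definition, but here $s=0$ already and what is actually needed is $W_\chi$. Second, Theorem~\ref{finalproposition} must be applied to the free $\Z[\pi_1(W)]$-chain complex $C_*(W,M_L;\Z[\pi_1(W)])$, not to $C_*(W^\varphi;\Z)$; the theorem compares the $\K\otimes_{\Z K}-$ and $\Z/q\otimes_{\Z G}-$ functors on a single $\Z G$-complex. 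Finally, the paper shows $[\lambda_\Q(W^\varphi)]=0$ and $[\lambda_\K(W^\varphi)]=0$ separately (Claims~1 and~2) and concludes via Novikov additivity; your phrase ``$\lambda_\K(W^\varphi)$ is metabolic and cancels $\K\otimes\lambda_\Q(W^\varphi)$'' should be sharpened to this.
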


\subsection{1-solvable cobordism and a metabolizer of the linking form}\label{subsection:metabolizer}
In this subsection, we recall the definition of the linking form 
\[\lambda_L\colon tH_1(X_L^\varphi,Y_a^\varphi)\times tH_1(X_L^\varphi,Y_a^\varphi)\to \Q/\Z\] defined in \cite{Friedl-Powell:2011-1} and prove Proposition \ref{proposition:metabolizer}. The adjoint of $\lambda_L$, $\operatorname{Ad}(\lambda_L)\colon tH_1(X_L^\varphi,Y_a^\varphi)\to tH_1(X_L^\varphi,Y_a^\varphi)^\wedge$, can be obtained by composing the following isomorphisms:
\begin{equation}\label{definitionoflinkingform}\tag{a}~tH_1(X_L^\varphi,Y_a^\varphi)\to tH^2(X_L^\varphi,Y_b^\varphi)\to\Ext_\Z (tH_1(X_L^\varphi,Y_b^\varphi),\Z)=tH_1(X_L^\varphi,Y_a^\varphi)^\wedge.\end{equation}
We used Poincar\'{e} duality, universal coefficient theorem, $H_1(X_L^\varphi,Y_b^\varphi)\cong H_1(X_L^\varphi,Y_a^\varphi)$, and $tH_1(X_L^\varphi,Y_a^\varphi)$ is a finite abelian group. 

Let $(W_0;X_L,X_H)$ be a 1-solvable cobordism. Recall $\varphi\colon H_1(M_L)\to \Z/p^i\oplus \Z/p^j$ is an admissible homomorphism and $H_1(M_L)\cong H_1(X_L)\,\oplus\, \Z$. Then, $\varphi|_{H_1(X_L)}$ extends to $H_1(W_0)$ canonically because $H_1(X_L)\cong H_1(W_0)$. In this sense, an admissible homomorphism $\varphi$ always induces a covering $(W_0^\varphi;X_L^\varphi,X_H^\varphi)\to (W;X_L,X_H)$.
\begin{proposition}\label{proposition:metabolizer} Suppose $(W_0;X_L,X_H)$ is a 1-solvable cobordism. Let $(W_0^\varphi;X_L^\varphi,X_H^\varphi)$ be a covering induced from an admissible homomorphism $\varphi\colon H_1(M_L)\to \Z/p^i\oplus \Z/p^j$, then 
\[ P=\Ker(tH_1(X_L^\varphi,Y^\varphi_a)\to tH_1(W_0^\varphi,Y^\varphi_a))\] is a metabolizer of the linking form $\lambda_L$. 
\end{proposition}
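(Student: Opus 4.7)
The plan is to verify the two defining conditions of a metabolizer separately: the isotropy $P\subseteq P^\perp$, and the order identity $|P|\cdot|P|=|tH_1(X_L^\varphi,Y_a^\varphi)|$. Before starting I would note that, since $(W_0;X_L,X_H)$ is an $H_1$-cobordism, $H_1(X_L)\cong H_1(W_0)$ and so $\varphi$ extends canonically to $H_1(W_0)$; the covering $W_0^\varphi\to W_0$ is then well-defined and restricts to $X_L^\varphi$, $X_H^\varphi$, $Y_a^\varphi$, and $Y_b^\varphi$ on the appropriate pieces. Applying Proposition~\ref{proposition1} to this covering yields that $(W_0^\varphi;X_L^\varphi,X_H^\varphi)$ is a $\Q$-coefficient $H_1$-cobordism for which the inclusion $FH_2(W_0^\varphi)\to FH_2(W_0^\varphi,X_L^\varphi)$ is surjective. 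These two facts will be the input from $1$-solvability in both steps below.

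To prove $P\subseteq P^\perp$, I would adapt the Cochran-Orr-Teichner strategy from \cite[Theorem~4.4]{Cochran-Orr-Teichner:2003-1}: for $x,y\in P$, use the long exact sequence of the triple $(W_0^\varphi,X_L^\varphi,Y_a^\varphi)$ to lift $x$ to a class $\alpha\in H_2(W_0^\varphi,X_L^\varphi)$ with $\partial\alpha=x$. The composition~\eqref{definitionoflinkingform} defining $\lambda_L$ extends from $X_L^\varphi$ to $W_0^\varphi$ via Poincar\'e-Lefschetz duality on the $4$-manifold, producing a commutative diagram in which $\lambda_L(x,y)$ is computed as the intersection of $\alpha$ with a relative $2$-chain lifting $y$ into $W_0^\varphi$. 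Since $y\in P$ also bounds relative to $Y_a^\varphi$ in $W_0^\varphi$, a diagram chase combined with the surjectivity from Proposition~\ref{proposition1}(2) forces this intersection to vanish, giving $\lambda_L(x,y)=0$.

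For the order identity, I plan to dualize the inclusion-induced map $j\colon tH_1(X_L^\varphi,Y_a^\varphi)\to tH_1(W_0^\varphi,Y_a^\varphi)$ by applying Poincar\'e-Lefschetz duality on $W_0^\varphi$ with boundary decomposition $\partial W_0^\varphi=X_L^\varphi\cup -X_H^\varphi$, together with the universal coefficient theorem, to obtain a Pontryagin-duality identification between $P=\Ker j$ and the cokernel of the analogous map $tH_1(X_L^\varphi,Y_b^\varphi)\to tH_1(W_0^\varphi,Y_b^\varphi)$. Since $tH_1(X_L^\varphi,Y_a^\varphi)\cong tH_1(X_L^\varphi,Y_b^\varphi)$, this forces $|P|^2=|tH_1(X_L^\varphi,Y_a^\varphi)|$, and combined with isotropy yields $P=P^\perp$. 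The main obstacle will be careful bookkeeping of the Poincar\'e-Lefschetz dualities in the presence of both the $3$-manifold boundary splitting $\partial X_L^\varphi=Y_a^\varphi\cup Y_b^\varphi$ and the $4$-manifold boundary splitting $\partial W_0^\varphi=X_L^\varphi\cup -X_H^\varphi$, and tracking torsion subgroups through UCT --- this is where the $\Q$-coefficient $H_1$-cobordism property supplied by Proposition~\ref{proposition1} becomes essential, as it kills precisely the free-rank contributions that would otherwise obstruct the order equality.
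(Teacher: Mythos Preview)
Your overall strategy---prove $P\subseteq P^\perp$ geometrically, then deduce $P=P^\perp$ from an order count---is the classical Casson--Gordon route and is a legitimate alternative to the paper's method. The paper instead builds a single commutative diagram
\[
\begin{diagram}
\node{tH_2(W_0^\varphi,X_L^\varphi)}\arrow{e,t}{\partial}\arrow{s,l}{\cong}\node{tH_1(X_L^\varphi,Y_a^\varphi)}\arrow{e,t}{\operatorname{inc}_*}\arrow{s,l}{\operatorname{Ad}(\lambda_L)}\node{tH_1(W_0^\varphi,Y_a^\varphi)}\arrow{s,l}{\cong}\\
\node{tH_1(W_0^\varphi,Y_a^\varphi)^\wedge}\arrow{e}\node{tH_1(X_L^\varphi,Y_a^\varphi)^\wedge}\arrow{e}\node{tH_2(W_0^\varphi,X_L^\varphi)^\wedge}
\end{diagram}
\]
with exact rows, from which $P=P^\perp$ drops out in one line. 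Either route needs the same two technical ingredients: (i) exactness of the top row on torsion subgroups, which is Lemma~\ref{exactlemma} and uses Proposition~\ref{proposition1}(2); and (ii) the identification of the outer vertical isomorphisms.

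Your Step~2 has a genuine gap at ingredient (ii). Poincar\'e--Lefschetz duality on the $4$-manifold $W_0^\varphi$ gives
\[
tH_1(W_0^\varphi,Y_a^\varphi)\cong tH^3(W_0^\varphi,\partial W_0^\varphi\smallsetminus Y_a^\varphi)\cong tH_2(W_0^\varphi,\partial W_0^\varphi\smallsetminus Y_a^\varphi)^\wedge,
\]
and $\partial W_0^\varphi\smallsetminus Y_a^\varphi$ contains all of $X_L^\varphi$ and $X_H^\varphi$, not merely $Y_b^\varphi$. So the dual of $j$ is \emph{not} the inclusion-induced map $tH_1(X_L^\varphi,Y_b^\varphi)\to tH_1(W_0^\varphi,Y_b^\varphi)$; it is (after further identifications) the connecting map $\partial\colon tH_2(W_0^\varphi,X_L^\varphi)\to tH_1(X_L^\varphi,Y_a^\varphi)$. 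Bridging this requires the paper's Claim that the inclusions induce $tH_1(W_0^\varphi,Y_a^\varphi)\cong tH_1(W_0^\varphi,X_H^\varphi)$ and $tH_2(W_0^\varphi,X_L^\varphi)\cong tH_2(W_0^\varphi,\partial W_0^\varphi\smallsetminus Y_a^\varphi)$, which are proved using specific homological properties of the Hopf link exterior $X_H^\varphi$ (drawn from \cite[Lemmas~2.5--2.9]{Friedl-Powell:2011-1}). Your proposal does not mention this, and without it the duality you invoke simply does not hold.

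Second, even granting a Pontryagin identification $P\cong(\Coker j')^\wedge$, the sentence ``since $tH_1(X_L^\varphi,Y_a^\varphi)\cong tH_1(X_L^\varphi,Y_b^\varphi)$, this forces $|P|^2=|tH_1(X_L^\varphi,Y_a^\varphi)|$'' is a non sequitur: you still need to control $|\Coker j|$ or equivalently show that $\operatorname{inc}_*$ surjects onto $tH_1(W_0^\varphi,Y_a^\varphi)$, and this again comes down to the Claim plus the torsion-exactness of Lemma~\ref{exactlemma}. Once you supply both of those, you are effectively reconstructing the paper's diagram; at that point it is cleaner to read off $P=P^\perp$ directly from the diagram rather than detour through an order count.
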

\begin{proof}Suppose that we have the diagram (\ref{maindiagram}) with two exact rows. Then, \[P^\perp = (\operatorname{Ad}(\lambda_L))^{-1}(\Ker \d^\wedge)=\Ker (\operatorname{inc}_*)=P.\] Hence, it suffices to prove the existence of diagram (\ref{maindiagram}) with two exact rows.
\begin{equation}\label{maindiagram}\tag{b}
\begin{diagram}
\node{tH_2(W_0^\varphi,X_L^\varphi)}\arrow{e,t}{\partial}\arrow{s,l}{\theta_1}\arrow{s,r}{\cong}\node{tH_1(X_L^\varphi,Y^\varphi_a)}\arrow{e,t}{\operatorname{inc}_*}\arrow{s,l}{\operatorname{Ad}(\lambda_L)}\arrow{s,r}{\cong}\node{tH_1(W_0^\varphi,Y^\varphi_a)}\arrow{s,l}{\theta_2}\arrow{s,r}{\cong}
\\\node{tH_1(W_0^\varphi,Y_a^\varphi)^\wedge}\arrow{e,t}{\operatorname{inc}_*^\wedge}\node{tH_1(X_L^\varphi,Y_a^\varphi)^\wedge}\arrow{e,t}{\partial^\wedge}\node{tH_2(W_0^\varphi,X_L^\varphi)^\wedge}
\end{diagram}
\end{equation}
As in (\ref{definitionoflinkingform}), $tH_2(W_0^\varphi,X_L^\varphi)\cong tH_1(W_0^\varphi,X_H^\varphi)^\wedge$ and $tH_1(W_0^\varphi,Y_a^\varphi)\cong tH_2(W_0^\varphi,\partial W_0^\varphi-Y_a^\varphi)^\wedge$ by Ponicar\'{e} duality and universal coefficient theorem. (Note that $\d W_0^\varphi=X_L^\varphi\,\cup\,X_H^\varphi$.) So, from the following claim, we can define isomorphisms $\theta_1$ and $\theta_2$.
\begin{claim}The inclusion maps induce two isomorphisms 
\begin{enumerate}
\item $tH_1(W_0^\varphi,Y^\varphi_a)\cong tH_1(W_0^\varphi,X^\varphi_H)$ and
\item $tH_2(W_0^\varphi,X_L^\varphi)\cong tH_2(W_0^\varphi,\partial W_0^\varphi-Y_a^\varphi)$.
\end{enumerate}
\end{claim}
\begin{proof}[Proof of Claim]
By Proposition \ref{proposition1} (3), $(W_0^\varphi;X_L^\varphi,X_H^\varphi)$ is an $H_1$-cobordism with $\Q$-coefficients. From this and the proofs of \cite[Lemmas 2.6, 2.7 and 2.9]{Friedl-Powell:2011-1} ($W_0$ plays the role of $E_C$),  
\[\Coker(\operatorname{inc}_*\colon H_1(X_H^\varphi,Y_a^\varphi)\to H_1(W_0^\varphi,Y_a^\varphi))\cong tH_1(W_0^\varphi,Y_a^\varphi).\]
From the homology long exact sequence of a triple $(W_0^\varphi,X_H^\varphi,Y_a^\varphi)$, we have an exact sequence
\[ 0\to tH_1(W_0^\varphi,Y_a^\varphi)\to H_1(W_0^\varphi,X_H^\varphi)\to H_0(X_H^\varphi,Y_a^\varphi)=0\]
which proves (1).

From the proof of \cite[Lemma 2.5]{Friedl-Powell:2011-1}, $tH_1(\d W_0^\varphi-Y_a^\varphi,X_L^\varphi)=0$ and the inclusion map $(\d W_0^\varphi-Y_a^\varphi,X_L^\varphi)\to (\d W_0^\varphi, X_L^\varphi)$ induces the zero map on $H_2$.  In particular,  
\[\operatorname{inc}_*\colon H_2(\d W_0^\varphi-Y_a^\varphi,X_L^\varphi)\to  H_2( W_0^\varphi, X_L^\varphi)\]
is also the zero map.  From the homology long exact sequence of a  triple $(W_0^\varphi,\d W_0^\varphi-Y_a^\varphi,X_L^\varphi)$,
\[H_2(W_0^\varphi,X_L^\varphi)\cong\Ker (\d \colon H_2(W_0^\varphi,\d W_0^\varphi-Y_a^\varphi)\to H_1(\d W_0^\varphi-Y_a^\varphi,X_L^\varphi)).\]
By taking torsion subgroups, we obtain (2) via
\begin{eqnarray*} tH_2(W_0^\varphi,X_L^\varphi)&\cong&\Ker (tH_2(W_0^\varphi,\d W_0^\varphi-Y_a^\varphi)\to tH_1(\d W_0^\varphi-Y_a^\varphi,X_L^\varphi)=0)\\
&=&tH_2(W_0^\varphi,\d W_0^\varphi-Y_a^\varphi).
\end{eqnarray*}
\end{proof}
Commutativity of the diagram (\ref{maindiagram}) also easily follows. For exactness of the first row of (\ref{maindiagram}), we prove the following Lemma.
\begin{lemma}\label{exactlemma} Suppose $(W_0;X_L,X_H)$ is a 1-solvable cobordism. We have the following exact sequence 
\[
\begin{diagram}
\node{tH_2(W_0^\varphi,X_L^\varphi)}\arrow{e,t}{\partial}\node{tH_1(X_L^\varphi,Y_a^\varphi)}\arrow{e,t}{\operatorname{inc}_*}\node{tH_1(W_0^\varphi,Y_a^\varphi)}
\end{diagram}
\]
which is the restriction of a long exact sequence of triple $(W_0^\varphi,X_L^\varphi,Y_a^\varphi)$ to their torsion subgroups.
\end{lemma}
\begin{proof}[Proof of the Lemma \ref{exactlemma}] Since $\operatorname{inc}_*\circ\partial =0$, we prove that $\Ker(\operatorname{inc}_*)\subset \Im \partial$.    
  Let $x\in \Ker(\operatorname{inc}_*)$. By the homology long exact sequence of triple $(W_0^\varphi,X_L^\varphi,Y_a^\varphi)$, there exists $y\in H_2(W_0^\varphi,X_L^\varphi)$ such that $\partial y=x$. By Proposition \ref{proposition1} (2), $FH_2(W_0^\varphi)\to FH_2(W_0^\varphi,X_L^\varphi)$ is surjective. So, $j\colon FH_2(W_0^\varphi,Y_a^\varphi)\to FH_2(W_0^\varphi,X_L^\varphi)$ is also surjective. We can choose $z\in FH_2(W_0^\varphi,Y_a^\varphi)$ such that $y-j(z)\in tH_2(W_0^\varphi,X_L^\varphi)$. Then, $\partial (y-j(z))=\partial y=x$ and this shows that $\Ker(\operatorname{inc}_*)\subset \Im \partial$.
\end{proof}
Note that if $A\xrightarrow{f} B\xrightarrow{g} C$ is an exact sequence of abelian groups. Since $\Q/\Z$ is a divisible group, $\Q/\Z$ is an injective $\Z$-module. For any abelian group $G$, $\Ext_\Z(G,\Q/\Z)=0$. Hence, $\Hom_\Z(-,\Q/\Z)$ is an exact functor and we obtain $C^\wedge\to B^\wedge\to A^\wedge$ is exact. This proves that the second row of the diagram (\ref{maindiagram}) is also exact and completes the proof of Proposition \ref{proposition:metabolizer}.\end{proof}
\subsection{Proof of Theorem A}\label{subsection:proof-of-main-theorem}
In this subsection, we prove Theorem A.  Let $(W_0;X_L,X_H)$ be a 1.5-solvable cobordism with $\beta_2(W_0,X_L)=2r$. Note that $\partial W_0=X_L\cup\partial X_H\times I \cup -X_H=M_L$. Attach $X_H\times I$ to $W_0$ along $\partial X_H\times I$ to get 
\[W=W_0\,\cup_{\partial X_H\times I}\,X_H\times I\] with $\partial W=M_L\,\sqcup\, -M_H$. Recall $\varphi\colon H_1(M_L)\to \Z/p^i\oplus \Z/p^j$. Applying Mayer-Vietoris argument to $W=W_0\,\cup\, X_H\times I$, the inclusion induces $H_1(M_L)\cong H_1(W)$. So, $\varphi$ extends to $H_1(W)$ naturally and denote the induced cobordism of coverings by $(W_{\vphantom{L}}^\varphi;M_L^\varphi,M_H^\varphi)$.

From Proposition~\ref{proposition:metabolizer}, we can take a metabolizer 
\[P:=\Ker(tH_1(X_L^\varphi,Y^\varphi_a)\to tH_1(W_0^\varphi,Y^\varphi_a))\] of the linking form $\lambda_L$.  We fix a character $\chi \colon H_1(M_L^\varphi)\to \Z/q^k$ satisfies that $\chi|_{H_1(X_L^\varphi)}$ factors through
\[
\begin{diagram}\dgARROWLENGTH=2.5em
      \node{H_1(X_L^\varphi)}\arrow{s}\arrow[2]{e,t}{ \chi|_{H_1(X_L^\varphi)}}\node[2]{\Z/q^k} 
        \\
      \node{H_1(X_L^\varphi,Y_a^\varphi)} \arrow{nee,b}{\delta} 
    \end{diagram}
\]
and $\delta$ vanishes on $P$. It remains to prove that $\tau(L,\chi)=0$.

We have the following facts and remarks. 

\begin{enumerate}
\item\label{item:character-extension} From the arguments of  \cite[Propositions 2.10, 2.12]{Friedl-Powell:2011-1} ($W_0$ and $W$ play the role of $E_C$ and $W_C$, respectively), we have the following: if $\delta$ vanishes on $P$, then there exist an integer $l\geq k$ and a character $H_1(W^\varphi) \to \mathbb{Z}/q^l$, denoted by $\chi$ as an abuse of notation, which fits into the following diagram:
\[
\begin{diagram}
\node{\pi_1(M_L^\varphi)}\arrow{e}\arrow{s,l}{\operatorname{inc}_*}\node{H_1(M_L^\varphi)}\arrow{e,t}{\chi}\arrow{s,l}{\inc_*}\node{\Z/q^k}\arrow{e,t}{q^{l-k}}\node{\Z/q^l}\\
\node{\pi_1(W^\varphi)}\arrow{e}\node{H_1(W^\varphi)}\arrow{nee,b,..}{\chi}
\end{diagram}
\]
\item\label{item:obstruction} Let $H_1(M_L)=\H'$ and $\phi'\colon \pi_1(M_L)\to H_1(M_L)$ be the Hurewicz homomorphism. Define $\phi\colon \pi_1(M_L^\varphi)\to \H'$ be the restriction of $\phi'$ to the subgroup $\pi_1(M_L^\varphi)$.  Let $\H=\Im{\phi}$. Since $H_1(M_L)\cong H_1(W)$, $\phi'$ extends to $\pi_1(W)$. Therefore, we use $\phi'\colon \pi_1(W)\to \H'$ and its restriction $\phi\colon \pi_1(W^\varphi)\to \H$ as an abuse of notation. Note that $\H'$ is isomorphic to $\Z^3$ and $\H$ is also isomorphic to $\Z^3$ as a finite index subgroup of $\H$.
\item\label{item:twisted-coefficient} By (\ref{item:character-extension}) and (\ref{item:obstruction}), we have $\chi\times \phi \colon \pi_1(W^\varphi)\to \Z/q^l\times \H$. If we write $\K=\C(\H)$, then $H_*(M_L^\varphi;\K), H_*(W_{\vphantom{L}}^\varphi;\K)$, and $H_*(W_{\vphantom{L}}^\varphi,M_L^\varphi;\K)$ can be defined from
\begin{equation*}
\Z[\pi_1(W^\varphi)]\xrightarrow{\chi\times \phi} \Z[\Z/q^l\times \H]=\Z[\Z/q^l][\H]\to \Q(\xi_{q^l})(\H)\to \mathbb{C}(\H)=\mathcal{K}.\end{equation*}

\item\label{item:trivial-character}By \cite[Lemma 3.4]{Friedl-Powell:2011-1},  there is a 4-manifold $W_\chi$ bounded by $2q^l$ copies of a 3-torus $T^3$, which is over $\mathbb{Z}/q^l\times \mathcal{H}$ as follows:
   \[\begin{diagram}\dgARROWLENGTH=1.9em
      \node{\bigsqcup^{q^l}\,T^3}\arrow{s,J}\arrow{see,t}{\chi\times \psi}
        \\
      \node{W_\chi} \arrow[2]{e,t}{\chi\times \psi} 
      \node[2]{K(\Z/q^l\times\H,1)} 
      \\
      \node{\bigsqcup^{q^l}\,T^3} \arrow{n,L}
      \arrow{nee,b}{\operatorname{tr}\times \psi}
    \end{diagram}
    \]Here, $\operatorname{tr}$ denotes the trivial character $\pi_1(T^3)\to \Z/q^l$ and $\psi\colon \pi_1(T^3)\cong \H$. Furthermore, the intersection forms of $W_\chi$ over $\Q$-coefficient and $\K$-coefficients are Witt-trivial.
\end{enumerate}
We can attach $\chi\times \phi\colon W^\varphi \to \Z/q^l\times \H$ and $W_\chi$ in (\ref{item:trivial-character}) along $\chi\times \psi\colon T^3\to \Z/q^l \times \H$ to obtain the cobordism $(W^\varphi\,\cup\,W_\chi,\chi\times \phi\,\cup\,\chi\times \psi)$ over $\Z/q^l\times \H$ between $(M_L^\varphi,\chi\times \phi)$ and $-(T^3,\operatorname{tr}\times \psi)$. From  Definition \ref{definition:Friedl-Powell-invariant}, 
\[\tau(L,\chi)=(\lambda_\K(W^\varphi\,\cup\,W_\chi)-\K\otimes \lambda_\Q(W^\varphi\,\cup\,W_\chi))\otimes1\in L^0(\K)\mathbin{\mathop{\otimes}_{\Z}}\Z[1/q].\]

By (\ref{item:trivial-character}), $[\lambda_\Q(W_\chi)]=0\in L^0(\Q)$ and $[\lambda_\K(W_\chi)]=0\in L^0(\K)$. In the following two claims we will prove that $[\lambda_\Q(W^\varphi)]=0\in L^0(\Q)$ and $[\lambda_{\K}(W^\varphi)]=0\in L^0(\K)$. By Novikov additivity, these will complete the proof of Theorem A.

\begin{claim1} $[\lambda_{\Q}(W^\varphi)]=0\in L^0(\Q)$.
\end{claim1}
\begin{proof}[Proof of Claim 1]
 Applying relative Mayer-Vietoris (see \cite[page 152]{Hatcher:2002}), $H_i(W_0,X_J)\cong H_i(W,M_J)$ for all $i$ and $J=L$ or $H$. (The other terms in the long exact sequence vanish because $H_*(X_H\times I, X_H)=0$.) Similarly, $H_i(W_0^\varphi,X_J^\varphi)\cong H_i(W_{\vphantom{J}}^\varphi,M_J^\varphi)$.

For brevity, let $A=\Z/p^i\oplus \Z/p^j$ and write $A=\{g_1,\ldots,g_t\}$. Since $(W_0;X_L,X_H)$ is a 1.5-solvable cobordism,  by Proposition \ref{proposition1} (1) and (2), 
\[\beta_2(W^\varphi,M_L^\varphi)=\beta_2(W_0^\varphi,X_L^\varphi)=|A|\cdot \beta_2(W_0,X_L)=2rt\] and $\inc_*\colon H_2(W_0^\varphi;\Q)\to H_2(W_0^\varphi,X_J^\varphi;\Q)$ is surjective for $J=L,H$. Since $H_2(W_0^\varphi,X_J^\varphi;\Q)\cong H_2(W_{\vphantom{J}}^\varphi,M_J^\varphi;\Q)$, $\inc_*\colon H_2(W_{\vphantom{J}}^\varphi;\Q)\to H_2(W_{\vphantom{J}}^\varphi,M_J^\varphi;\Q)$ is surjective, too.   Applying Proposition~\ref{proposition1}~(3), $H_i(W_0^\varphi,M_J^\varphi;\Q)=0$ for $i=0,1$. From the homology long exact sequence of a pair $(W_{\vphantom{J}}^\varphi,M_J^\varphi)$, this proves that $(W_{\vphantom{J}}^\varphi,M_L^\varphi,M_H^\varphi)$ is an $H_1$-cobordism over $\Q$-coefficients.

Recall $\d W=M_L\,\sqcup\,-M_H$. For $X=\d W_{\vphantom{L}}^\varphi, M_L^\varphi$, and $M_H^\varphi$, let 
\[I_X=\Im (\operatorname{inc}_*\colon H_2(X;\Q)\to H_2(W^\varphi;\Q)).\] For $J=L,H$, using the homology long exact sequences of pairs,  
\[H_2(W_{\vphantom{J}}^\varphi;\Q)/I_{\d W^\varphi}\cong H_2(W_{\vphantom{J}}^\varphi;\Q)/I_{M_J^\varphi}\cong H_2(W_{\vphantom{J}}^\varphi,M_J^\varphi;\Q)\]
whose rank is $2rt$. (Similar argument was used in the proof of \cite[Proposition 2.6]{Cochran-Kim:2004-1}.) We remark that to prove the last isomorphism, we used the fact that $\operatorname{inc}_*\colon H_1(M_J^\varphi;\Q)\to H_1(W_{\vphantom{J}}^\varphi;\Q)$ is an isomorphism for $J=L,H$.

Let  $l_1,\ldots,l_r,d_1,\ldots,d_r$ be (generators of) $2$-lagrangian and $1$-duals in $H_2(W^\varphi;\Z)$. From the (right) group action of $A$ on $H_2(W^\varphi;\Z)$, we define
\[ l_{ij}=l_i\cdot g_j\textrm{~and~} d_{kl}=d_k\cdot g_l \textrm{~for~} 1\leq i,k\leq r\textrm{~and~}1\leq j,l\leq t.\]

The intersection pairing $\lambda_\Q(W^\varphi)\colon H_2(W^\varphi;\Q)/I_{\d W^\varphi}\times H_2(W^\varphi;\Q)/I_{\d W^\varphi}\to \Q$ with respect to (the image of) $\{l_{ij},d_{kl}\}$ is \[\begin{pmatrix}
        0&I_{rt\times rt}\\
        I_{rt\times rt}&X
     \end{pmatrix}\]
     because $l_i\cdot d_k$ is the Kronecker delta $\delta_{ik}$.

Let $L(\Q)\subset H_2(W^\varphi;\Q)/I_{\d W^\varphi}$ be the $\Q$-span of the image of $l_{ij}\otimes 1_{\Q}$. Then, $\lambda_\Q(W^\varphi)$ vanishes on $L(\Q)\times L(\Q)$ and $\dim_\Q L(\Q)=\frac{1}{2}\dim_\Q (H_2(W^\varphi;\Q)/I_{\d W^\varphi})=rt$. So, $[\lambda_\Q(W^\varphi)]=0\in L^0(\Q)$. 
\end{proof}
By \cite[Lemma 3.2]{Friedl-Powell:2011-1}, $H_*(M_J^\varphi;\K)=0$ for $J=H$ or $L$. Therefore, the twisted intersection form
\[\lambda_\K(W^\varphi)\colon H_2(W^\varphi;\K)\times H_2(W^\varphi;\K)\to \K\] is non-singular. 

\begin{claim2}$[\lambda_\K(W^\varphi)]=0\in L^0(\K)$.
\end{claim2}
\begin{proof}[Proof of Claim 2]
Let
$ \alpha\colon \pi_1(W^\varphi)\xrightarrow{\chi} \Z/q^l\hookrightarrow \mathbb{C}^\times=\GL(1,\mathbb{C})$ and $\alpha'\colon \pi_1(W)\to \GL(t,\mathbb{C})$ be the induced representation of $\alpha$. Recall $\phi'\colon \pi_1(W)\to \H'$ and $\phi\colon\pi_1(W^\varphi)\to \H$ in (\ref{item:obstruction}). Define $\Gamma:=\Im(\alpha\times \phi)$. There is a corresponding cover $(W_{\vphantom{L}}^\Gamma,M^\Gamma_L)\to (W^\varphi,M_L^\varphi)$ where $\pi_1(W^\Gamma)=\Ker(\alpha\times\phi)$. Recall $W^\varphi\to W$ is $\Z/p^i\oplus \Z/p^j$-cover and $\alpha\times \phi\colon \pi_1(W^\varphi)\to \C^\times\times \H$.  Since $\Z/p^i\oplus \Z/p^j$, $\C^\times$, and $\H$ are abelian, 
\[\pi_1(W)^{(2)}\leq \pi_1(W^\varphi)^{(1)}\leq \Ker(\alpha\times \phi)=\pi_1(W^\Gamma)\]

Equivalently, there is a sequence of coverings:
\[\begin{diagram}
\node{W^{(2)}}\arrow{e}\node{W^\Gamma}\arrow{e}\node{W^\varphi}\arrow{e}\node{W}
\end{diagram}
\]

Since $\Z/q^l\hookrightarrow \C^\times$ is injective,  $\Ker \alpha=\Ker \chi$, where $\alpha\colon \pi_1(W^\varphi)\xrightarrow{\chi}\Z/q^l\hookrightarrow \C^\times$. From this, $\Gamma\overset{\textrm{def}}=\Im(\alpha\times\phi)=\Im(\chi\times\phi)$.
In particular, the ring homomorphism $\Z[\pi_1(W^\varphi)]\to \K$ in (\ref{item:twisted-coefficient}) factors through $\Z\Gamma$ and 
\[ C_*(W^\varphi;\K)\overset{\textrm{def}}=\K\mathbin{\mathop{\otimes}_{\Z[\pi_1(W^\varphi)]}}C_*(W^\varphi;\Z[\pi_1 W^\varphi])=\K\mathbin{\mathop{\otimes}_{\Z\Gamma}}C_*(W^{\Gamma};\Z).
\]
Choose 2-cycles $\{\tilde{l_1},\ldots,\tilde{l_r}\}\subset C_2(W^\Gamma;\Z)$ which represent the image of (generators of) 2-lagrangian under the map induced by $W_0^{(2)}\to W^{(2)}\to W^\Gamma$. The covering map $W^\Gamma\to W^\varphi$ induces a surjection $\Cov(W^\Gamma|W)\to \Cov(W^\varphi|W)=\{g_1,\ldots,g_t\}$. Choose a lift $\tilde{g_j}\in \Cov(W^\Gamma|W)$ of $g_j$ for each $j=1,\ldots,t$. From the right action of $\Cov(W^{\Gamma}|W)$ on $C_2(W^{\Gamma};\Z)$, define 
\[ \tilde{l_{ij}}=\tilde{l_i}\cdot \tilde{g_j}\textrm{~for~}1\leq i\leq r\textrm{~and~}1\leq j\leq t.\]

Let
\[ L(\K)\subset H_2(W^\varphi;\K)=H_2(\K\mathbin{\mathop{\otimes}_{\Z\Gamma}}C_*(W^{\Gamma};\Z)),\] be the $\K$-span of $\{[1_\K \otimes \tilde{l_{ij}}]~|~1\leq i\leq r, 1\leq j\leq t\}$ in $H_2(W^\varphi;\K)$. We remark that $L(\K)$ does not depend on the choice of $\tilde{g_i}$. We claim that $L(\K)$ is a lagrangian for the non-singular twisted intersection form $\lambda_\K(W^\varphi)$.

First, we prove $\lambda_\K$ vanishes on $L(\K)\times L(\K)$. Since $\lambda_\K$ is $\K$-sesquilinear, the following is enough:
\[ \lambda_\K ([1_\K\mathbin{\mathop{\otimes}_{\Z\Gamma}} \tilde{l_{ik}}],[1_\K\mathbin{\mathop{\otimes}_{\Z\Gamma}}\tilde{l_{jl}}])=\sum_{g\in \Cov(W^{\Gamma}|W)} \lambda_{W^{\Gamma}}(\tilde{l_i},\tilde{l_j})g_l^{\vphantom{-1}}gg_k^{-1}=0.\]

Now, we prove $\dim_\K L(\K)=\frac{1}{2}\dim_\K H_2(W^\varphi;\K)$. Recall that $H_*(M_L;\K)=0$ by \cite[Lemma 3.2]{Friedl-Powell:2011-1}. Therefore, $\inc_*\colon H_2(W_{\vphantom{L}}^\varphi;\K)\to H_2(W_{\vphantom{L}}^\varphi,M_L^\varphi;\K)$ is an isomorphism. Now, for simplicity, we abuse notation by regarding $\tilde{l_{ij}}$ as an element in $C_2(W_{\vphantom{L}}^\Gamma,M_L^\Gamma;\Z)$ and $L(\K)$ as a subspace of $H_2(W_{\vphantom{L}}^\varphi,M_L^\varphi;\K)$.

Recall that $\{l_1,\ldots, l_r\}$ is the chosen generators of 2-lagrangian in $H_2(W^\varphi;\Z)$. Since the covering  $W^{\Gamma}\to W$ sends $\tilde{g_j}$ to $1$, the image of $\{[\tilde{l_{ij}}]\in H_2(W^\Gamma;\Z)~|~1\leq i\leq r, 1\leq j\leq t\}$ in $H_2(W,M_L;\Z)$ (via covering induced map) is exactly $\{\pi(l_{1}),\ldots,\pi(l_r)\}$ where $\pi\colon H_2(W^\varphi)\to H_2(W)\to H_2(W,M_L)$.  

Since $(W_0;X_L,X_H)$ is a 1.5-solvable cobordism with $\beta_2(W_0,X_L)=2r$, $H_2(W,M_L)\cong H_2(W_0,X_L)$ is a free abelian group of rank $2r$. Let 
\[L(\Z/q)\subset H_2(W,M_L;\Z/q)\cong (\Z/q)^{2r}\]
be the $\Z/q$-span of $\{\pi(l_i)\otimes_\Z1_{\Z/q}\}_{i=1}^r$.  By the definition of 2-lagrangian, $\{\pi(l_1),\ldots,\pi(l_r)\}$ generates a rank $r$-summand of $H_2(W,M_L)\cong \Z^{2r}$. In particular, from the universal coefficient theorem, $\dim_{\Z/q}L(\Z/q)=r$.

To apply Theorem \ref{finalproposition}, we fit our notations with those used in Section \ref{section:Casson-Gordon-type-representations}. Define $A=\Z/p^i\oplus \Z/p^j$, $G=\pi_1(W)$, $K=\pi_1(W^\varphi)$, $C_*=C_*(W,M_L;\Z [\pi_1(W)])$, $Q=\mathbb{C}$, $Q(\H)=\K$, $d=1$, $\alpha\times \phi\colon \pi_1(W^\varphi)\to \C^\times \times \H$, and $\alpha'\times \phi'\colon \pi_1(W)\to \GL(t,\C)\times \H'$. (As a $\Z K$-module, $C_*$ is isomorphic to $C_*(W_{\vphantom{L}}^\varphi,M_L^\varphi;\Z[\pi_1(W^\varphi)])$.) We remark that we assumed in Section \ref{section:Casson-Gordon-type-representations} that $\alpha|_{\Ker\phi}$ factors through a $q$-group for some prime $q$. This is automatically satisfied for $\alpha\colon \pi_1(W^\varphi)\xrightarrow{\chi}\Z/q^l\hookrightarrow \C^\times$.

With these notations, apply Theorem \ref{finalproposition} for the case $I=\emptyset$ (that is, $M=\overline{M}=0$) and $n=0,1$ to obtain 
\[ \dim_\K H_n(W_{\vphantom{J}}^\varphi, M_J^\varphi;\K)\leq \dim_{\Z/q}H_n(W,M_J;\Z/q)=0\]
for $n=0,1$ and $J=L$ or $H$. By duality and universal coefficient spectral sequence, $H_i(W_{\vphantom{L}}^\varphi,M_L^\varphi;\K)=0$ for $i=3,4$. 
From this,  \[ \dim_\K H_2(W_{\vphantom{L}}^\varphi,M_L^\varphi;\K)=\chi^\K(W_{\vphantom{L}}^\varphi,M_L^\varphi)=\chi^\Q(W_{\vphantom{L}}^\varphi,M_L^\varphi)=2rt.\]
The last equality is from $\beta_2(W_{\vphantom{L}}^\varphi,M_L^\varphi)=2rt$ and $(W_{\vphantom{L}}^\varphi;M_L^\varphi,M_H^\varphi)$ is an $H_1$-cobordism over $\Q$-coefficient. These are proved in the proof of Claim 1.

Now, we apply Theorem \ref{finalproposition} for the case $n=2$, $I=\{i~|~1\leq i\leq r\}$ and $x_{i}$ is a 2-cycle in $C_*$ such that
\[[1_\K \mathbin{\mathop{\otimes}_{\Z \Gamma}} \tilde{l_i}]=[1_\K \mathbin{\mathop{\otimes}_{\Z K}} x_i]\in H_2(\K\mathbin{\mathop{\otimes}_{\Z K}}C_*)=H_2(W_{\vphantom{L}}^\varphi,M_L^\varphi;\K)\textrm{~for~}i=1,\ldots,r. 
\]
Recall $\tilde{l_{ij}}=\tilde{l_i}\cdot \tilde{g_j}$, $\tilde{g_j}\in \Cov(W^\Gamma|W)$ is a lifting of $g_j\in \Cov(W^\varphi|W)$. Since $\Cov(W^\varphi|W)$ can be identified with the set of cosets of $K$ in $G$, by the definition in Theorem \ref{finalproposition}, 
\[M= \textrm{the~}\K\textrm{-span of~}\{[1_\K\mathbin{\mathop{\otimes}_{\Z \Gamma}} \tilde{l_{ij}}]~|~1\leq i\leq r, 1\leq j\leq t\}=L(\K).\] 
Similarly, by the definition in Theorem \ref{finalproposition}, $\overline{M}$ is the $\Z/q$-span of $\{[1_{\Z/q}\otimes_{\Z G}x_i]\}_{i=1}^r$. Since $\{[1_{\Z/q}\otimes_{\Z G}x_i]\}_{i=1}^r=\{1_{\Z/q}\otimes_\Z \pi(l_i)\}_{i=1}^r$, 
\[\overline{M}=\textrm{the~}\Z/q\textrm{-span of~}\{1_{\Z/q}\mathbin{\mathop{\otimes}_{\Z}}  \pi(l_i)~|~ 1\leq i\leq r\}=L(\Z/q).\]
From the conclusion of Theorem \ref{finalproposition} for the above case, we have the following inequality
\begin{equation*}\dim_{\K} H_2(W_{\vphantom{L}}^\varphi,M_L^\varphi;\K)-\dim_{\K} L(\K)\leq  t\cdot(\dim_{\Z/q} H_2(W,M_L;\Z/q)-\dim_{\Z/q}L(\Z/q)).\end{equation*}That is,
\begin{eqnarray*}\dim_{\K}L(\K)&\geq& 
\dim_{\K}H_2(W,M_L;\K)-t\cdot(\dim_{\Z/q}H_2(W,M_L;\Z/q)-\dim_{\Z/q}L(\Z/q))\\
&=&2rt-rt+rt=rt.
\end{eqnarray*}
On the other hand, $\dim_{\K}L(\K)\leq rt$ because $L(\K)$ is the $\K$-span of $rt$ elements. So, $\dim_{\K}L(\K)=rt=\frac{1}{2}\dim_\K H_2(W^\varphi;\K)$ and $L(\K)$ is a lagrangian of $\lambda_\K(W^\varphi)$. That is, $[\lambda_\K(W^\varphi)]=0\in L^0(\K)$. 
\end{proof}
\section{Solvable cobordism and abelian invariants of links}\label{abelianinvariants}

In this section, we study the abelian invariants of links (studied in \cite{Kawauchi:1978-1} and \cite{Hillman:2012-1}) in the context of Whitney tower/grope concordance using $h$-solvable cobordism. Throughout this section, $\mu$ is the fixed natural number. Denote $\Z[t_1^\pm,\ldots,t_\mu^\pm]$ by $\Lambda_\mu$. The ring $\Lambda_\mu$ is endowed with the involution $-\colon t_i\mapsto t_i^{-1}$. Let $S$ be the multiplicative set generated by $\{t_1-1,\ldots,t_\mu-1\}$. Denote the localization of $\Lambda_\mu$ with respect to $S$ by $\Lambda_{\mu S}$. Let $\K$ be the quotient field of $\Lambda_\mu$.

\subsection{Blanchfield form of $\mu$-component links}\label{subsection:Blanchfieldform}
Let $L$ be a $\mu$-component link and $X_L$ be the link exterior of $L$. Let $R$ be a unique factorization domain with an involution $-$ and quotient field $K$ (our case is $R=\Lambda_{\mu S}, K=\K$). We recall the definition of the Witt group $W(K,R,-)$.

A \emph{linking pairing over $R$} is a $R$-module $M$ with a sesquilinear pairing 
\[ b\colon M\times M \to K/R\] such that for all $x,y,z\in M$ and $r\in R$
\begin{enumerate}
\item $b(x,y+z)=b(x,y)+b(x,z)$
\item $b(rx,y)=rb(x,y)=b(x,\overline{r}y)$
\item $b(x,y)=\overline{b(y,x)}$
\end{enumerate}
(Here, the involution $-$ on $K/R$ is induced from the involution on $R$.) We denote it by $(M,b)$ or just $b$ when $M$ is clearly understood. 
A linking pairing $(M,b)$ is \emph{primitive} (\emph{non-singular}) if the adjoint of $b$, 
\[\operatorname{Ad}(b)\colon M\to \Hom_R(M,K/R)\] is an injection (an $R$-module isomorphism), respectively.
The sum of linking pairings $(M,b)$ and $(M',b')$ is $(M\oplus M',b\oplus b')$. A pairing $(M,b)$ is \emph{neutral} if there is a submodule $N$ of $M$ such that 
\[ N=N^\perp =\{m\in M~|~b(n,m)=0~\forall~n\in N\}.\]
Two pairings $(M,b)$ and $(M',b')$ are \emph{Witt equivalent} if there are neutral pairings $(N,c)$ and $(N',c')$ such that $(M,b)\oplus (N,c)\cong (M',b')\oplus (N',c')$. Then, the set of Witt equivalence classes of linking pairings over $R$ with an involution $-$ is an abelian group, denoted by $W(K,R,-)$.

For a $R$-module $M$, following \cite[Chapter 3]{Hillman:2012-1}, we define the $R$-torsion submodule of $M$, 
\[ tM=\{m\in M~|~rm=0~\textrm{for~some~}r\neq 0\in R\}=\Ker (M\to M\otimes_R K),\] the maximal pseudonull submodule of $M$, 
\[zM=\Ker (tM\to \Ext^1_R(\Ext^1_R(tM,R),R)),\] 
and 
\[\hat{t}M=tM/zM.\] 
Note that a $R$-module $M$ is called \emph{pseudonull} if $M_{\mathfrak{p}}=0$ for every height 1 prime ideal $\mathfrak{p}$ of $R$. 

From the Alexander duality, the Hurewicz map becomes $\pi_1(X_L)\to H_1(X_L)=\Z^\mu$. We have the following exact sequence 
\[ H_1(\partial X_L;\Lambda_\mu)\to H_1(X_L;\Lambda_\mu)\to H_1(X_L,\partial X_L;\Lambda_\mu)\to H_0(\partial X_L;\Lambda_\mu)\]
whose extermal terms are $\prod\limits_{i=1}^\mu (t_i-1)$-torsion (in particular, $S$-torsion) because $\Z^\mu$-cover of $\partial X_L$ is a disjoint union of $S^1\times \R$ or $\R\times \R$. From this observation, by localizing the above sequence with respect to $S$, we obtain $H_1(X_L;\Lambda_{\mu S})\cong H_1(X_L,\partial X_L;\Lambda_{\mu S})$. It follows from the (localized) Blanchfield duality \cite{Blanchfield:1957-1} (as in \cite[page 36]{Hillman:2012-1}) that we have the following primitive linking pairing :
\[ b_L\colon \hat{t}H_1(X_L;\Lambda_{\mu S})\times \hat{t}H_1(X_L;\Lambda_{\mu S})\to \K/\Lambda_{\mu S}.\]
Here, to define $b_L$, we need the fact that $\K/\Lambda_{\mu S}$ contains no nontrivial pseudonull submodule, \cite[Theorem 3.9 (2)]{Hillman:2012-1}.

In this setting, Hillman \cite[Theorem 2.4]{Hillman:2012-1} proved that $[b_L]\in W(\K,\Lambda_{\mu S},-)$ is a concordance invariant of $L$. Here is our theorem which generalizes \cite[Theorem 2.4]{Hillman:2012-1}.

 \begin{theoremB}\label{blanchfieldform}Suppose $L_0$ and $L_1$ are $\mu$-component links. If two link exteriors $X_{L_0}$ and $X_{L_1}$ are 1-solvable cobordant, then $[b_{L_0}]=[b_{L_1}]\in W(\K,\Lambda_{\mu S},-)$. In particular, the conclusion holds if $L_0$ and $L_1$ are height 3 Whitney tower/grope concordant.\end{theoremB}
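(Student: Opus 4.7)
The plan is to promote a $1$-solvable cobordism $W$ between $X_{L_0}$ and $X_{L_1}$ to an explicit metabolizer for $b_{L_0}\oplus(-b_{L_1})$ over $\Lambda_{\mu S}$, thereby yielding $[b_{L_0}]=[b_{L_1}]\in W(\K,\Lambda_{\mu S},-)$; the height $3$ Whitney tower/grope hypothesis reduces to $1$-solvable cobordism by \cite[Theorem~2.13]{Cha:2012-1}. Since $W$ is an $H_1$-cobordism, the inclusions induce $H_1(X_{L_i})\cong H_1(W)\cong \Z^\mu$, so the canonical map $\pi_1(W)\to \Z^\mu$ equips $W$ and $\partial W$ with well-defined $\Lambda_\mu$- and (by flatness of localization) $\Lambda_{\mu S}$-coefficient homology. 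For each of the $\mu$ tori $T^2_i$ along which $X_{L_0}$ and $-X_{L_1}$ are glued to form $\partial W$, the element $t_i-1$ both annihilates $H_*(T^2_i;\Lambda_\mu)$ and is a unit in $\Lambda_{\mu S}$, so $H_*(T^2_i;\Lambda_{\mu S})=0$. Mayer--Vietoris then identifies
\[
H_*(\partial W;\Lambda_{\mu S})\cong H_*(X_{L_0};\Lambda_{\mu S})\oplus H_*(X_{L_1};\Lambda_{\mu S}).
\]

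Next I would set
\[
P:=\ker\!\left(\hat tH_1(\partial W;\Lambda_{\mu S})\xrightarrow{i_*}\hat tH_1(W;\Lambda_{\mu S})\right)\subseteq \hat tH_1(X_{L_0};\Lambda_{\mu S})\oplus\hat tH_1(X_{L_1};\Lambda_{\mu S}),
\]
and prove $P=P^\perp$ under $b_{L_0}\oplus(-b_{L_1})$. The inclusion $P\subseteq P^\perp$ is a standard ``intersection-on-the-cobordism'' diagram chase: for $x,y\in P$ bounding $2$-chains $\alpha,\beta$ in $W^\infty$, the Blanchfield pairing of $x$ and $y$ is computed by a lifted intersection of $\alpha$ with $\beta$ valued in $\Lambda_{\mu S}$, which is therefore zero in $\K/\Lambda_{\mu S}$. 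This step should be purely formal, invoking Poincar\'e--Lefschetz duality $H_2(W,\partial W;\Lambda_{\mu S})\cong H^2(W;\Lambda_{\mu S})$ together with the Bockstein-style description of the Blanchfield pairing through the long exact sequence of $(W,\partial W)$.

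The reverse containment $P^\perp\subseteq P$ is where the $1$-solvable hypothesis is essential, and will be the main obstacle. The $1$-lagrangian $\ell_1,\dots,\ell_r$ together with its $1$-duals $d_1,\dots,d_r$ in $H_2(W;\Lambda_\mu)$ are designed to pin down the $\Lambda_\mu$-rank of $H_2(W;\Lambda_\mu)$ and of its image in $H_2(W,\partial W;\Lambda_\mu)$; after localizing to $\Lambda_{\mu S}$ and feeding this into Poincar\'e--Lefschetz duality and the universal-coefficient spectral sequence, one hopes to force the image of $\partial\colon H_2(W,\partial W;\Lambda_{\mu S})\to H_1(\partial W;\Lambda_{\mu S})$ to be exactly the annihilator $P$ of itself. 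The delicate point is that $\Lambda_{\mu S}$ is not a PID, so the half-rank counting that works for knots over the PID $\Lambda_1$ does not apply verbatim and pseudonull contributions may appear at each step; one must work systematically modulo the maximal pseudonull submodule $z(-)$, which is precisely the reason Hillman defines $b_L$ on $\hat tH_1$ rather than $tH_1$. I expect the bulk of the technical work to lie in matching the integral intersection form $\lambda_1$ over $\Lambda_\mu$, where the $1$-solvable structure is recorded, with the localized Blanchfield pairing over $\Lambda_{\mu S}$, and in controlling how the $1$-duals survive the passage to $\hat tH_1$.
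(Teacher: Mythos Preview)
Your setup through Mayer--Vietoris and the inclusion $P\subseteq P^\perp$ is essentially the paper's argument. The divergence, and the gap, is in the reverse direction.

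The paper does \emph{not} prove $P^\perp\subseteq P$, and over $\Lambda_{\mu S}$ this equality need not hold: pseudonull contributions can genuinely separate $P$ from $P^\perp$. Your plan to recover the equality by rank-counting with the $1$-lagrangian and $1$-duals is therefore aimed at the wrong target. What the paper does instead is a two-step maneuver. First, it uses the \emph{relative} Blanchfield pairing
\[
b_W\colon \hat tH_1(W;\Lambda_{\mu S})\times \hat tH_2(W,\partial W;\Lambda_{\mu S})\to \K/\Lambda_{\mu S},
\]
whose adjoint is injective by Blanchfield duality, together with the identity $b_{\partial W}([q],[r])=-b_W(i_*[q],[R])$ for $r=\partial R$, to show that every $x\in P^\perp$ satisfies $i_*(x)=0$ in $\hat tH_1(W;\Lambda_{\mu S})$; combined with the exactness of
\[
tH_2(W,\partial W;\Lambda_\mu)\to tH_1(\partial W;\Lambda_\mu)\to tH_1(W;\Lambda_\mu)
\]
(this is \cite[Theorem~4.13]{Cha:2012-1}, and is where the $1$-solvable hypothesis enters, not through any direct rank computation), one concludes that $P^\perp/P$ is a \emph{pseudonull} $\Lambda_{\mu S}$-module. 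Second, since for pseudonull $M$ one has $\Hom_{\Lambda_{\mu S}}(M,\K/\Lambda_{\mu S})=0$ (because $\Hom(M,\K)=0$ and $\Ext^1(M,\Lambda_{\mu S})=0$), applying $\Hom(-,\K/\Lambda_{\mu S})$ to $0\to P\to P^\perp\to P^\perp/P\to 0$ yields $P^{\perp\perp}=P^\perp$. Thus the metabolizer is $Q=P^\perp$, not $P$.

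So the two missing ingredients are: (i) the $1$-solvable structure is consumed entirely by invoking \cite[Theorem~4.13]{Cha:2012-1} to get torsion-level exactness, rather than by tracking $\ell_i,d_j$ through $\hat tH_1$; and (ii) one should not insist on $P=P^\perp$ but instead prove $P^\perp/P$ is pseudonull and take $P^\perp$ as the self-orthogonal submodule. You correctly anticipated that pseudonull phenomena would be the obstacle; the resolution is to absorb them by passing to $P^\perp$.
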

 \begin{proof} Let $W$ be a 1-solvable cobordism between $X_{L_0}$ and $X_{L_1}$. Note that
 \[ \partial W=X_{L_0}\cup \mu (S^1\times S^1\times I)\cup -X_{L_1}\] 
 and $\Z^\mu=H_1(X_{L_i})\xrightarrow{\operatorname{inc}_*} H_1(W)$ is an isomorphism for $i=0,1$. 
 
 By the ($\Lambda_{\mu S}$-coefficient) Mayer-Vietoris sequence of the triple $(\partial W, X_{L_0},X_{L_1})$, \[H_1(\partial W;\Lambda_{\mu S})\cong H_1(X_{L_0};\Lambda_{\mu S})\oplus H_1(X_{L_1};\Lambda_{\mu S}),\]
since $ H_i(\mu (S^1\times S^1\times I);\Lambda_\mu)$ is $S$-torsion for $i=0,1$. From this, the (localized) Blanchfield form 
 \[ b_{\partial W}\colon \hat{t}H_1(\partial W;\Lambda_{\mu S})\times \hat{t}H_1(\partial W;\Lambda_{\mu S})\to \K/\Lambda_{\mu S}\] is the direct sum $b_{L_0}\oplus (-b_{L_1})$. Therefore, it suffices to find a submodule $Q$ of $\hat{t}H_1(\partial W;\Lambda_{\mu S})$ such that $Q=Q^\perp$.

 By applying \cite[Theorem 4.13]{Cha:2012-1} to $n=1$, $G=\Z^\mu$, $\phi\colon \pi_1(W)\to H_1(W)=\Z^\mu$, and $R=\Z$, we have the following Lemma.
 \begin{lemma}\cite[Theorem 4.13]{Cha:2012-1}\label{abelianlemma} In the above situation, 
\[ tH_2(W,\partial W;\Lambda_\mu)\to tH_1(\partial W;\Lambda_\mu)\to tH_1(W;\Lambda_\mu)\]
 is exact.
 \end{lemma}
 
 Let $I_{\partial W}$ and $I_W$ be $\Lambda_{\mu S}$-coefficient intersection forms of $\partial W$ and $W$, respectively. We have Blanchfield form, 
 \[b_W\colon \hat{t}H_1(W;\Lambda_{\mu S})\times \hat{t}H_2(W,\partial W;\Lambda_{\mu S})\to \K/\Lambda_{\mu S}.\]
 Let $P=\Im(tH_2(W,\partial W;\Lambda_{\mu S})\to \hat{t}H_1(\partial W;\Lambda_{\mu S}))$. Choose relative 2-cycles $Q$ and $R$ in $C_2(W,\partial W;\Lambda_{\mu S})$ representing the classes in $tH_2(W,\partial W;\Lambda_{\mu S})$. Denote the boundaries of $Q$ and $R$ by $q,r\in C_1(\partial W;\Lambda_{\mu S})$, respectively. The corresponding classes $[q],[r]$ in $\hat{t}H_1(\partial W;\Lambda_{\mu S})$ are actually in $P$. There exists $a\in \Lambda_{\mu S}-\{0\}$ such that $aq=\partial u$ for some $u\in C_2(\partial W;\Lambda_{\mu S})$. Then,
 \[b_{\partial W}([q],[r])=a^{-1}I_{\partial W}(u,r)=-a^{-1}I_W(i_*(u),R)=-b_W([\partial Q],[R])\quad  (\operatorname{mod} \Lambda_{\mu S}).\]
(Here, $i_*\colon C_2(\partial W;\Lambda_{\mu S})\to C_2(W;\Lambda_{\mu S}$).)
Note that $[\partial Q]=0\in \hat{t}H_1(W;\Lambda_{\mu S})$. Therefore, 
 \[ b_{\partial W}([q],[r])=-b_W([\partial Q],[R])=0\textrm{~for all~}[q],[r]\in P.\]
This shows that $P\leq P^\perp$. Suppose that $x\in C_1(\partial W;\Lambda_{\mu S})$ represents a torsion class in $tH_1(\partial W;\Lambda_{\mu S})$ and $[x]\in P^\perp$. That is, 
\[ b_{\partial W}([x],[y])=0\textrm{~for all~}y=\partial Y,~[Y]\in \hat{t}H_2(W,\partial W;\Lambda_{\mu S}).\]
So,
\[b_W([x],[Y])=-b_{\partial W}([x],[y])=0\textrm{~for all~}[Y]\in \hat{t}H_2(W,\partial W;\Lambda_{\mu S}).\]
By Blanchfield duality for $(W,\partial W)$, the adjoint of $b_W$, 
\[\operatorname{Ad}(b_W)\colon \hat{t}H_1(W;\Lambda_{\mu S})\to \operatorname{Hom}_{\Lambda_{\mu S}}(\hat{t}H_2(W,\partial W;\Lambda_{\mu S}),K/\Lambda_{\mu S})\] is injective. Therefore, $[x]=0\in \hat{t}H_1(W;\Lambda_{\mu S})$ or $x$ represents a homology class in $zH_1(W;\Lambda_{\mu S})$. This shows that $P^\perp/P$ is a pseudonull $\Lambda_{\mu S}$-module. We claim that $P^{\perp\perp}=P^\perp$. 
The inclusion maps $P\hookrightarrow P^\perp$ and $P^\perp\hookrightarrow \hat{t}H_1(\partial W;\Lambda_{\mu S})$ induce two vertical maps in the following diagram (here, the horizontal map is $\operatorname{Ad}(b_{\partial W})$):
\[
\begin{diagram}
\node{\hat{t}H_1(\partial W;\Lambda_{\mu S})}\arrow{e}\arrow{se,b}{i}\arrow{sse,b}{j\circ i}\node{\Hom_{\Lambda_{\mu S}}(\hat{t}H_1(\partial W;\Lambda_{\mu S}),\K/\Lambda_{\mu S})}\arrow{s}\\
\node[2]{\Hom_{\Lambda_{\mu S}}(P^\perp,\K/\Lambda_{\mu S})}\arrow{s,r}{j}\\
\node[2]{\Hom_{\Lambda_{\mu S}}(P,\K/\Lambda_{\mu S})}
\end{diagram}
\]
By definition, $P^{\perp\perp}=\Ker i$ and $P^{\perp}=\Ker({j\circ i})$. By applying $\Hom_{\Lambda_{\mu S}}(-,\K/\Lambda_{\mu S})$ to $0\to P\to P^\perp\to P^\perp/P\to 0$, we obtain that
\[0\to \Hom_{\Lambda_{\mu S}}(P^\perp/P,\K/\Lambda_{\mu S})\to\Hom_{\Lambda_{\mu S}}(P^\perp,\K/\Lambda_{\mu S})\xrightarrow{j} \Hom_{\Lambda_{\mu S}}(P,\K/\Lambda_{\mu S})\]
is exact. That is, $\Ker j\cong\Hom_{\Lambda_{\mu S}}(P^\perp/P,\K/\Lambda_{\mu S})$.

From the short exact sequence $0\to \Lambda_{\mu S}\to \K\to \K/\Lambda_{\mu S}\to 0$, the following is exact :
\[\Hom_{\Lambda_{\mu S}}(P^\perp/P,\K)\to \Hom_{\Lambda_{\mu S}}(P^\perp/P,\K/\Lambda_{\mu S})\to \Ext^1_{\Lambda_{\mu S}}(P^\perp/P,\Lambda_{\mu S}).\]
Since $P^\perp/P$ is $\Lambda_{\mu S}$-torsion and $\K$ is $\Lambda_{\mu S}$-torsion free, $\Hom_{\Lambda_{\mu S}}(P^\perp/P,\K)=0$. Also, $P^\perp/P$ is a pseudonull  $\Lambda_{\mu S}$-module implies that $\Ext^1_{\Lambda_{\mu S}}(P^\perp/P,\Lambda_{\mu S})=0$. (By Theorem 3.9 of \cite{Hillman:2012-1}, for a unique factorization domain $R$ and $R$-module $M$, $M$ is pseudonull if and only if $\Hom_R(M,R)=0$ and $\Ext^1_R(M,R)=0$.) Therefore, 
\[ \Ker j\cong\Hom_{\Lambda_{\mu S}}(P^\perp/P,\K/\Lambda_{\mu S})=0.\]
From the kernel-cokernel exact sequence, 
\[ 0\to \Ker i\to \Ker (j\circ i)\to \Ker j=0\] is exact. This shows that $P^\perp=\Ker (j\circ i)\cong\Ker i=P^{\perp\perp}$ and completes the proof. \end{proof}
\subsection{Multivariable Alexander polynomial of links}\label{subsection:Alexanderpolynomial}

In this subsection, we prove Theorem C which generalizes \cite[Theorems A, B]{Kawauchi:1978-1} concerning the Fox-Milnor condition for the Alexander polynomial of links. 

First, we recall some definitions of \cite{Kawauchi:1978-1}. Since $\Lambda_\mu$ is N\"{o}therian, for a finitely generated $\Lambda_\mu$-module $M$, we can choose a presentation matrix $P$ of $M$ from an exact sequence $\Lambda_\mu^m\xrightarrow{P}\Lambda_\mu^n\to M\to 0$. Moreover, for all $k$, one can choose a $m\times n$ presentation matrix $P$ with $n>k$ and $m\geq n-k$.  In this situation, define \emph{the $k$-th Alexander polynomial of $M$}, denoted by $\Delta_k(M)$, to be the greatest common divisor of the size $(n-k)\times (n-k)$ minors of $P$. (It is well-known that $\Delta_k(M)$ is well-defined up to a unit of $\Lambda_\mu$ which is proved in \cite{Crowell-Fox:1977-1}.)

\begin{remark}\label{easylemma}
(1) From \cite[Theorem 4.10]{Blanchfield:1957-1}, if $d=\rank_{\Lambda_\mu} M$, then $\Delta_d(M)=\Delta_0(tM)$. \\
(2) From \cite[Lemma 2.4]{Kawauchi:1978-1}, if $0\to M'\to M\to M''\to 0$ is a short exact sequence of $\Lambda_\mu$-torsion modules, then $\Delta_0(M)=\Delta_0(M')\Delta_0(M'')$.\\
(3) By definition, for a nonzero $\Lambda_\mu$-torsion module $M$, $\Delta_0(M)\neq 0$. 
\end{remark}

Recall that $L$ is a $\mu$-component link in $S^3$ and the meridian map is $\pi_1(X_L)\to \Z^\mu$. We define the torsion Alexander polynomial of $L$ by $\Delta_L^T:=\Delta_0(tH_1(X_L;\Lambda_\mu))$. Now we state our theorem.
\begin{theoremC} Suppose $L_0$ and $L_1$ are $\mu$-component links. If two link exteriors $X_{L_0}$ and $X_{L_1}$ are 1-solvable cobordant, then 
\begin{enumerate}
\item $\rank_{\Lambda_\mu}H_1(X_{L_0};\Lambda_\mu)=\rank_{\Lambda_\mu}H_1(X_{L_1};\Lambda_\mu)$ and
\item $\Delta_{L_0}^Tf_0 \overline{f_0}\overset{\cdot}=\Delta_{L_1}^T f_1\overline{f_1}$ for some $f_i(t_1,\ldots,t_\mu)\in \Lambda_\mu$, $i=0,1$ with $|f_i(1,\ldots,1)|=1$. 
\end{enumerate}
In particular, the conclusion holds if $L_0$ and $L_1$ are height 3 Whitney tower/grope concordant.
\end{theoremC}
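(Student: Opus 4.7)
The plan is to follow the paper's overall strategy: apply Theorem \ref{finalproposition} to bound twisted homology of the cobordism, and then extract the algebraic invariants from Blanchfield duality, as was done for Theorems A and B. Let $W$ be a 1-solvable cobordism between $X_{L_0}$ and $X_{L_1}$. Since $W$ is an $H_1$-cobordism, the inclusions induce $H_1(W) \cong H_1(X_{L_j}) \cong \Z^\mu$ for $j = 0, 1$, and I write $\phi \colon \pi_1(W) \to \Z^\mu$ for the Hurewicz map.

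For statement (1), I apply Theorem \ref{finalproposition} with $G = \pi_1(W)$, $A = \{1\}$ (so $K = G$ and $t = 1$), $\phi' = \phi$, $Q = \C$, $d = 1$, and $\alpha \colon G \to \C^\times$ the trivial representation (which vacuously factors through any $q$-group). Taking $C_* = C_*(W, X_{L_j}; \Z[\pi_1(W)])$ and $I = \emptyset$, Theorem \ref{finalproposition} yields
\[\dim_{\C(\Z^\mu)} H_n(W, X_{L_j}; \C(\Z^\mu)) \leq \dim_{\Z/q} H_n(W, X_{L_j}; \Z/q).\]
Because $W$ is an $H_1$-cobordism, $H_n(W, X_{L_j}; \Z) = 0$ for $n = 0, 1$, and by the universal coefficient theorem the mod-$q$ groups vanish as well. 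Hence $H_n(W, X_{L_j}; \C(\Z^\mu)) = 0$ for $n = 0, 1$, and by faithful flatness of $\C(\Z^\mu)/\K$ also $H_n(W, X_{L_j}; \K) = 0$. The long exact sequence of the pair $(W, X_{L_j})$ then gives $H_1(X_{L_j}; \K) \cong H_1(W; \K)$, proving that $\rank_{\Lambda_\mu} H_1(X_{L_j}; \Lambda_\mu) = \dim_\K H_1(W; \K)$ is independent of $j$.

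For statement (2), I combine the metabolizer construction from the proof of Theorem B with a Mayer-Vietoris analysis of the Alexander polynomial of $\partial W$. Set
\[P = \Im\bigl(tH_2(W, \partial W; \Lambda_\mu) \xrightarrow{\partial} tH_1(\partial W; \Lambda_\mu)\bigr),\]
which by Lemma \ref{abelianlemma} coincides with $\Ker(\operatorname{inc}_*)$. Arguing as in the proof of Theorem B, after $S$-localizing and quotienting by pseudonull submodules $P$ becomes a metabolizer for the Blanchfield form on $\hat{t}H_1(\partial W; \Lambda_{\mu S})$, giving an identity $\Delta_0\bigl(\hat{t}H_1(\partial W; \Lambda_{\mu S})\bigr) \overset{\cdot}{=} g\,\overline{g}$ in $\Lambda_{\mu S}$. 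A Mayer-Vietoris computation for $\partial W = X_{L_0} \cup \mu(S^1 \times S^1 \times I) \cup -X_{L_1}$, together with the multiplicativity of $\Delta_0$ on short exact sequences (Remark \ref{easylemma}(2)), then expresses the left side in terms of $\Delta_{L_0}^T$ and $\Delta_{L_1}^T$ up to correction factors coming from the collar tori and from the $S$-localization and pseudonull-submodule quotients. Clearing denominators produces the desired identity $\Delta_{L_0}^T f_0 \overline{f_0} \overset{\cdot}{=} \Delta_{L_1}^T f_1 \overline{f_1}$.

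The main obstacle I anticipate is verifying the augmentation condition $|f_i(1, \ldots, 1)| = 1$. This requires showing that all correction factors (torus contributions, pseudonull orders, powers of elements of $S$) combine into a genuine norm $f\,\overline{f}$ whose augmentation is $\pm 1$; the self-duality under $t_i \mapsto t_i^{-1}$ ultimately traces back to Poincar\'{e}-Lefschetz duality on $W$, but the combinatorial bookkeeping of how each correction contributes to the $L_0$-side versus the $L_1$-side is delicate and will follow the template laid out in Kawauchi's original argument from \cite{Kawauchi:1978-1}.
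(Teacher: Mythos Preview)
Your argument for part (1) is fine and essentially matches the paper (you invoke Theorem~\ref{finalproposition} with trivial data where the paper cites \cite[Proposition~2.10]{Cochran-Orr-Teichner:2003-1}; these are interchangeable here).

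The gap is in part (2), precisely at the point you flag as an ``obstacle''. Your plan is to extract the Fox--Milnor relation from Theorem~B (the metabolizer for $b_{\partial W}$ over $\Lambda_{\mu S}$) plus Mayer--Vietoris, and then ``clear denominators'' and appeal to Kawauchi's template. This cannot yield the augmentation condition $|f_i(1,\ldots,1)|=1$. The metabolizer lives over $\Lambda_{\mu S}$, so the norm relation you obtain is only up to units of $\Lambda_{\mu S}$, i.e.\ up to powers of $t_i-1$; these augment to $0$, and no amount of self-duality or bookkeeping removes them. The paper's closing remark makes exactly this point: from Theorem~B alone one only gets $f_i\in\Lambda_{\mu S}\setminus\{0\}$.

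What the paper actually does is avoid the $S$-localized route for the augmentation step and instead work directly with the exact sequence
\[
tH_2(W,X_{L_i};\Lambda_\mu)\to tH_1(X_{L_i};\Lambda_\mu)\to tH_1(W;\Lambda_\mu)\to tH_1(W,X_{L_i};\Lambda_\mu)
\]
coming from Lemma~\ref{abelianlemma}. The new ingredient is Lemma~\ref{lemma2.1kawauchi}, a genuine \emph{extension} of Kawauchi's \cite[Lemma~2.1]{Kawauchi:1978-1}: Kawauchi's lemma assumes $H_k(X,X_0;\Z)=0$, but here $H_2(W,X_{L_i};\Z)\cong\Z^{2r}$ is nonzero, so his template does not apply as stated. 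The extended lemma requires the additional hypothesis $\rank_{\Lambda_\mu}H_2(W,X_{L_i};\Lambda_\mu)=2r$, which the paper verifies via an Euler-characteristic count, and then concludes that $\Delta_0(tH_j(W,X_{L_i};\Lambda_\mu))$ has augmentation $\pm1$ for $j=1,2$. This is what pins down $|f_i(1,\ldots,1)|=1$; Theorem~B and Mayer--Vietoris are only used afterwards to organize the irreducible factors. Your proposal is missing both this lemma and the rank computation needed to invoke it.
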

To prove  Theorem C, we need to prove the following generalization of \cite[Lemma~2.1]{Kawauchi:1978-1}. 
\begin{lemma}\label{lemma2.1kawauchi}Let $X$ be a finite connected CW-complex with an epimorphism 
\[\gamma\colon \pi_1(X)\to \Z^\mu\] Let $X_0$ be a subcomplex  of $X$. For some fixed $k$, if $H_k(X,X_0;\Z)=\Z^l$ and 
\[\rank_{\Lambda_\mu} H_k(X,X_0;\Lambda_\mu)=l\] then the $l$-th Alexander polynomial $A=\Delta_l(H_k(X,X_0;\Lambda_\mu))=\Delta_0(tH_k(X,X_0;\Lambda_\mu))$ satisfies $|A(1,\ldots,1)|=1$.
\end{lemma}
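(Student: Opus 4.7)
The plan is to work chain-theoretically with $C_\ast := C_\ast(X, X_0; \Lambda_\mu)$ and its specialization $\bar C_\ast := C_\ast(X, X_0; \Z) = C_\ast \otimes_{\Lambda_\mu} \Z$ (obtained by sending every $t_j \mapsto 1$). Let $A_i$ and $\bar A_i := A_i(1, \ldots, 1)$ denote the boundary matrices, and set $n_i := \rank_{\Lambda_\mu} C_i$, $r_i := \rank_{\Lambda_\mu} A_i$, $\bar r_i := \rank_\Z \bar A_i$. As a first reduction, note that specializing any $\Lambda_\mu$-presentation of $H_k(X, X_0; \Lambda_\mu)$ at $t = 1$ yields a $\Z$-presentation of the finitely generated abelian group $\bar M := H_k(X, X_0; \Lambda_\mu) \otimes_{\Lambda_\mu} \Z$, and the formation of $(n - l) \times (n - l)$ minors commutes with this specialization. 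Hence $A(1, \ldots, 1)$ divides $\Delta_l(\bar M)$ in $\Z$, so it suffices to exhibit some $(n_k - l) \times (n_k - l)$ minor of a chain-level-derived $\Lambda_\mu$-presentation of $H_k(X, X_0; \Lambda_\mu)$ whose value at $t = 1$ is $\pm 1$.

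The next step is to extract unimodular data from the hypotheses. Specialization at $t = 1$ cannot raise matrix rank, so $\bar r_i \leq r_i$; combined with the identities $l = n_k - r_k - r_{k+1} = n_k - \bar r_k - \bar r_{k+1}$ coming from the two rank hypotheses, this forces $\bar r_i = r_i$ for $i = k, k+1$. Moreover, since $H_k(X, X_0; \Z) = \Z^l$ is torsion-free and $\Im \bar A_k \subset \bar C_{k-1}$ is torsion-free as a subgroup of a free abelian group, both $\Im \bar A_{k+1}$ is a direct summand of $\Ker \bar A_k$ and $\Ker \bar A_k$ is a direct summand of $\bar C_k$. Equivalently, the Smith normal forms of $\bar A_{k+1}$ and $\bar A_k$ have every nonzero invariant factor equal to $1$; in particular $\bar A_{k+1}$ contains an $r_{k+1} \times r_{k+1}$ submatrix of determinant $\pm 1$, and $\bar A_k$ contains an $r_k \times r_k$ submatrix of determinant $\pm 1$.

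Finally, these two unimodular blocks should assemble into the desired $(n_k - l) \times (n_k - l)$ minor. Any $\Z$-invertible change of basis of the $\bar C_i$ bringing $\bar A_{k+1}$ and $\bar A_k$ into compatible block-triangular form lifts (via the inclusion $\Z \hookrightarrow \Lambda_\mu$) to a $\Lambda_\mu$-invertible change of basis of the $C_i$, since a $\Z$-matrix of determinant $\pm 1$ remains a unit over $\Lambda_\mu$. In the new basis one writes $C_k = C_k^{(1)} \oplus C_k^{(2)} \oplus C_k^{(3)}$ of ranks $r_{k+1}, l, r_k$, so that the off-block entries of $A_k$ and $A_{k+1}$ all lie in the augmentation ideal $(t_1 - 1, \ldots, t_\mu - 1)$, while the distinguished diagonal blocks --- $A_{k+1}$ from $C_{k+1}$ onto $C_k^{(1)}$ and $A_k$ from $C_k^{(3)}$ into a direct summand of $C_{k-1}$ --- remain unimodular at $t = 1$. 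The chain relation $A_k A_{k+1} = 0$ then lets one extract a $\Lambda_\mu$-presentation matrix of $H_k(X, X_0; \Lambda_\mu)$ whose $(r_{k+1} + r_k) \times (r_{k+1} + r_k) = (n_k - l) \times (n_k - l)$ minor along these two diagonal blocks evaluates at $t = 1$ to the product of the two $\pm 1$ determinants, hence to $\pm 1$. The main obstacle is this final assembly: carefully packaging the chain-complex data $A_k, A_{k+1}$ together with the identity $A_k A_{k+1} = 0$ into a single genuine $\Lambda_\mu$-presentation of $H_k(X, X_0; \Lambda_\mu)$ that exhibits both unimodular blocks simultaneously on its diagonal --- a delicate but routine bookkeeping based on the block decomposition above.
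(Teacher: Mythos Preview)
Your first two steps --- the rank-comparison argument forcing $\bar r_k = r_k$ and $\bar r_{k+1} = r_{k+1}$, and the extraction of unimodular blocks from the torsion-freeness of $H_k(X,X_0;\Z)$ --- are correct and match the paper's setup exactly.

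The gap is in your final assembly. You assert that the chain data $A_k$, $A_{k+1}$, and $A_k A_{k+1}=0$ can be packaged into an explicit $\Lambda_\mu$-presentation of $H_k(X,X_0;\Lambda_\mu)$ with $n_k$ generators and a block-diagonal minor of the desired shape, calling this ``delicate but routine bookkeeping.'' It is not routine. Over $\Lambda_\mu$ with $\mu\ge 2$, the kernel $\Ker A_k$ need not be free or even projective, so there is no direct way to promote the chain complex to a free presentation of $H_k$ itself. Your basis change over $\Z$ lifts to $\Lambda_\mu$, but after the lift $\Ker A_k$ is \emph{not} simply $C_k^{(1)}\oplus C_k^{(2)}$: the ``off-block'' entries of $A_k$, though in the augmentation ideal, are generally nonzero, and they obstruct any naive description of $\Ker A_k$ by $n_k-r_k$ free generators. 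Without such a description you have no presentation of $H_k$ on which to read off the $(n_k-l)\times(n_k-l)$ minor you want.

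The paper bypasses this entirely by working with $\Coker\partial_{k+1}$ instead of $H_k$. From the short exact sequence
\[
0 \to H_k(X,X_0;\Lambda_\mu) \to \Coker\partial_{k+1} \to \Im\partial_k \to 0
\]
and the fact that $\Im\partial_k$ is torsion-free (as a submodule of a free $\Lambda_\mu$-module), one gets $tH_k = t\Coker\partial_{k+1}$, hence
\[
\Delta_l(H_k) = \Delta_0(tH_k) = \Delta_0(t\Coker\partial_{k+1}) = \Delta_{l+r_k}(\Coker\partial_{k+1}).
\]
The point is that $\Coker\partial_{k+1}$ has a tautological presentation matrix, namely $A_{k+1}$ itself, so $\Delta_{l+r_k}(\Coker\partial_{k+1})$ is just the gcd of the $r_{k+1}\times r_{k+1}$ minors of $A_{k+1}$. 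Your unimodular $r_{k+1}\times r_{k+1}$ block of $\bar A_{k+1}$ then finishes the argument immediately: a single such minor evaluates to $\pm 1$ at $t=1$, so the gcd does too. The unimodular block in $\bar A_k$ is never needed. This is the move that replaces your ``assembly'' step.
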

\begin{remark0}For the $l=0$ case  (\cite[Lemma 2.1]{Kawauchi:1978-1}), we only need to assume $H_k(X,X_0;\Z)=0$ because from our proof, we can deduce 
\[\rank_{\Lambda_\mu} H_k(X,X_0;\Lambda_\mu) \leq 0.\]
In this sense, Lemma \ref{lemma2.1kawauchi} is a generalization of \cite[Lemma 2.1]{Kawauchi:1978-1}.
\end{remark0}
\begin{proof}[Proof of Lemma \ref{lemma2.1kawauchi}] Since $X_0$ is a subcomplex of $X$,  for all $q$, we fix a basis for the $q$-th (cellular) chain complex $C_q(X,X_0;\Z)\cong \Z^{s_q}$. By lifting each element in the chosen bases, we also fix a $\Lambda_\mu$-basis for the $C_q(X,X_0;\Lambda_\mu)$ for all $q$. With these chosen bases, we can write $\partial_q\colon C_q(X,X_0;\Lambda_\mu)\to C_{q-1}(X,X_0;\Lambda_\mu)$ as a matrix $(\alpha_{ij}^q)$, $\alpha_{ij}^q\in \Lambda_\mu$. 

With respect to the chosen basis of $C_*(X,X_0;\Z)$, $\partial_q\colon C_q(X,X_0;\Z)\to C_{q-1}(X,X_0;\Z)$ is represented by the integral matrix $(\alpha_{ij}^q(1,\ldots,1))$. Let $\tilde{r_q}=\operatorname{rank}(\alpha_{ij}^q)$, $r_q=\operatorname{rank}(\alpha_{ij}^q(1,\ldots,1))$. Then, $r_q\leq \tilde{r_q}$. Since $H_k(X,X_0;\Z)=\Z^l$, 
\[ l=\rank_\Z \Ker \partial_k-\rank_\Z \Im\partial_{k+1}=s_k-r_k-r_{k+1}.\]
Similarly, from $\rank_{\Lambda_\mu} H_k(X,X_0;\Lambda_\mu)=l$,
\[ l=s_k-\tilde{r_k}-\tilde{r_{k+1}}\]
Since $r_q\leq \tilde{r_q}$ for all $q$,
\[ l=s_k-\tilde{r_k}-\tilde{r_{k+1}}\leq s_k-r_k-r_{k+1}=l\] which implies that $r_k=\tilde{r_k}, r_{k+1}=\tilde{r_{k+1}}$.

Since $\Coker\partial_{k+1}=C_k(X,X_0;\Lambda_\mu)/\Im \partial_{k+1}$ and $\Im \partial_k\cong C_k(X,X_0;\Lambda_\mu)/\Ker \partial_k$, we have the short exact sequence
\[0\to H_k(X,X_0;\Lambda_\mu)\to \Coker \partial_{k+1}\to \Im \partial_i\to 0\]
As a submodule of a free module, $\Im\partial_k$ is a $\Lambda_\mu$-torsion free module of rank $\tilde{r_k}=r_k$. Then, $tH_k(X,X_0;\Lambda_\mu)=t\Coker\partial_{k+1}$ and $\dim_\K\Coker\partial_{k+1}\otimes_{\Lambda_\mu}\K=l+r_k$.
\[\Delta_l(H_k(X,X_0;\Lambda_\mu))=\Delta_0(tH_k(X,X_0;\Lambda_\mu))=\Delta_0(t\Coker\partial_{k+1})=\Delta_{l+r_k}(\Coker\partial_{k+1})\] 
The first and last inequality follows from Remark \ref{easylemma} (1).
Similarly, we have the short exact sequence, 
\[ 0\to H_k(X,X_0;\Z)\to \Coker\partial_{k+1}^\Z\to \Im\partial_k^\Z\to 0.\]
(Here, to avoid the confusion, we denote the differential on $C_*(X,X_0;\Z)$ by $\partial_*^\Z$.)
Since $\Z$ is a PID, every submodule of finitely generated free $\Z$-module is free. So, $\Im\partial_k^\Z$ is isomorphic to $\Z^{r_k}$. Therefore, 
\[ \Coker\partial_{k+1}^\Z=H_k(X,X_0;\Z)\oplus \Z^{r_k}=\Z^{l+r_k}\]
(Here, we used the assumption that $H_k(X,X_0;\Z)=\Z^l$.)
Note that the matrices $(\alpha_{ij}^{k+1})$ and $(\alpha_{ij}^{k+1}(1,\ldots,1))$ are presentation matrices of $\Coker\partial_{k+1}$ and  $\Coker\partial_{k+1}^\Z$, respectively. Therefore, 
\[|\Delta_l(H_k(X,X_0;\Lambda_\mu))(1,\ldots,1)|=|\Delta_{l+r_k}(\Coker\partial_{k+1})(1,\ldots,1)|=1.\]  This completes the proof.
\end{proof}
\begin{proof}[Proof of Theorem C]Let $W$ be a 1-solvable cobordism between $X_{L_0}$ and $X_{L_1}$. In particular, the inclusion induces $\Z^\mu=H_1(X_{L_0})\cong H_1(W)$ and $H_1(W,X_{L_0})=H_1(W,X_{L_1})=0$. By Poincar\'{e} duality and universal coefficient theorem, 
\[H_2(W,X_{L_0})\cong H^2(W,X_{L_1})\cong \Hom_\Z (H_2(W,X_{L_1}),\Z)=\Z^{2r}\] (Since $W$ is a 1-solvable cobordism between $X_{L_0}$ and $X_{L_1}$, $\rank_\Z H_2(W,X_{L_1})$ is even.) Let $C_*=C_*(W,X_{L_0};\Lambda_\mu)$. Then, 
\[ H_i(C_*\mathbin{\mathop{\otimes}_{\Lambda_\mu}} \Z)=H_i(W,X_{L_0};\Z)=0\textrm{~for~}i=0,1.\] Since $\Lambda_\mu=\Z[\Z^\mu]$ and $\Z^\mu$ is a poly-torsion-free-abelian-group, by \cite[Proposition 2.10]{Cochran-Orr-Teichner:2003-1}, 
\[ H_i(C_*\mathbin{\mathop{\otimes}_{\Lambda_\mu}}\K)=H_i(W,X_{L_0};\Lambda_\mu)\mathbin{\mathop{\otimes}_{\Lambda_\mu}}\K=0\textrm{~for~}i=0,1.\]
Similarly, $H_i(W,X_{L_1};\Lambda_\mu)\otimes_{\Lambda_\mu}\K=0$ for $i=0,1$. From duality and universal coefficient spectral sequence, $H_i(W,X_{L_0};\Lambda_\mu)\otimes_{\Lambda_\mu}\K=0\textrm{~for~} i=3,4$.
So, 
\[ \rank_{\Lambda_\mu} H_2(W,X_{L_i};\Lambda_\mu)=\chi(C_*)=\chi(C_*(W,X_{L_i};\Z))=\rank_\Z H_2(W,X_{L_i};\Z)=2r \] for $i=0,1$. As in  Lemma \ref{abelianlemma}, the existence of $1$-lagrangians and $1$-duals implies that the following is exact for $i=0,1$:
\[ tH_2(W,X_{L_i};\Lambda_\mu)\to H_1(X_{L_i};\Lambda_\mu)\to H_1(W;\Lambda_\mu)\to tH_1(W,X_{L_i};\Lambda_\mu).\]
(Note that $H_1(W,X_{L_i};\Lambda_\mu)=tH_1(W,X_{L_i};\Lambda_\mu)$ for $i=0,1$.)
In particular, (1) is proved because \[ \rank_{\Lambda_\mu}H_1(X_{L_0};\Lambda_\mu)=\rank_{\Lambda_\mu}H_1(W;\Lambda_\mu)=\rank_{\Lambda_\mu}H_1(X_{L_1};\Lambda_\mu).\]

The following is aslo exact for $i=0,1$:
\[ tH_2(W, X_{L_i};\Lambda_\mu)\to tH_1(X_{L_i};\Lambda_\mu)\to tH_1(W;\Lambda_\mu)\to tH_1(W,X_{L_i};\Lambda_\mu).\]
Now, fix $i$. Denote the 0-th Alexander polynomial of these modules and $tH_1(\partial W;\Lambda_\mu)$ by \[\Delta_2^{\vphantom{T}},\Delta_{L_i}^T,\Delta_W^{\vphantom{T}},\Delta_1^{\vphantom{T}},\textrm{~and~}\Delta_{\partial W}^{\vphantom{T}}\] respectively. (Of course, $\Delta_2$ and $\Delta_1$ depend on $i$.) 

Note that $H_1(W)\cong \Z^\mu$ and $H_2(W,X_{L_i};\Z)\cong\Z^{2r}$, so $\rank_{\Lambda_\mu} H_2(W,X_{L_i};\Lambda_\mu)=2r$. Applying Lemma \ref{lemma2.1kawauchi} to $(X,X_0)=(W,X_{L_i})$ for the cases $(k,l)=(2,2r)$ and $(1,0)$, we obtain $|\Delta_2(1,\ldots,1)|=|\Delta_1(1,\ldots,1)|=1$. Using Remark \ref{easylemma} (2), $\Delta_{L_i}^Tg_i^{\vphantom{'}}\overset{\cdot}=\Delta_W^{\vphantom{T}}g_i'$ for some $g_i^{\vphantom{'}},g_i'\in \Lambda_\mu$ with $|g_i^{\vphantom{'}}(1,\ldots,1)|=|g_i'(1,\ldots,1)|=1$ for $i=0,1$. In particular, $\Delta_{L_0}^Tg\overset{\cdot}=\Delta_{L_1}^Tg'$ for some $g,g'\in \Lambda_\mu$ with $|g(1,\ldots,1)|=|g'(1,\ldots,1)|=1$. 

Since $\Lambda_\mu$ is a unique factorization domain, we can split $\Delta_{L_i}^T=u_iv_i$ and $\Delta_{\partial W}^{\vphantom{T}}=uv$ uniquely (up to units of $\Lambda_\mu$) so that $v_0,v_1,v$ consist of all irreducible factors $f\in \Lambda_\mu$ with $|f(1,\ldots,1)|\neq 1$ in $\Delta_{L_0}^T,\Delta_{L_1}^T,\Delta_{\partial W}^{\vphantom{T}}$. From $\Delta_{L_0}^Tg\overset{\cdot}=\Delta_{L_1}^Tg'$ and $|g(1,\ldots,1)|=|g'(1,\ldots,1)|=1$, $v_0\overset{\cdot}=v_1$.

From the Mayer-Vietoris sequence, the following is exact: 
\[ tH_1(\partial X_{L_0};\Lambda_\mu)\to tH_1(X_{L_0};\Lambda_\mu)\oplus tH_1(X_{L_1};\Lambda_\mu)\to tH_1(\partial W;\Lambda_\mu)\to tH_0(\partial X_{L_0};\Lambda_\mu). \] 
The extreme terms are $\prod\limits_{i=1}^\mu(t_i-1)$-torsion. It follows that $\Delta_{\partial W}^{\vphantom{T}}\lambda\overset{\cdot}=\Delta_{L_0}^T\Delta_{L_1}^T\lambda'$ for some factors $\lambda,\lambda'$ of $\prod\limits_{i=1}^\mu(t_i-1)$. By the reciprocity of Blanchfield pairing \cite{Blanchfield:1957-1}, $\Delta_{L_i}^T\overset{\cdot}= \overline{\Delta_{L_i}^T}$ for $i=0,1$. Now, we have 
\[ u\overset{\cdot}= u_0u_1\overset{\cdot}= \overline{u_0}u_1.\]
By Theorem B, we proved that the Blanchfield form of $\partial W$ is neutral, which implies that $\Delta_{\partial W}=h\overline{h}$ for some $h\in\Lambda_\mu$ by \cite[Theorem 3.27]{Hillman:2012-1}. In particular, 
\[u\overset{\cdot}= f\overline{f}\textrm{~for some~}f\in \Lambda_\mu\textrm{~with~} |f(1,\ldots,1)|=1.\]
Combining all these observations,  
\[ \Delta_{L_0}^Tf\overline{f}\overset{\cdot}=u_0v_0u=u_0 \overline{u_0}u_1v_0\overset{\cdot}=u_0\overline{u_0}u_1v_1\overset{\cdot}=\Delta_{L_1}^Tu_0\overline{u_0}.\]
Here, $f$ and $u_0$ satisfy the conditions $|f(1,\ldots,1)|=1$, $|u_0(1,\ldots,1)|=1$. This completes the proof.
\end{proof}
\begin{remark0}It should be noted that Theorem C is not a direct consequence of Theorem B. From Theorem B without Lemma \ref{lemma2.1kawauchi}, one may deduce that if $X_{L_0}$ and $X_{L_1}$ are 1-solvable cobordant, then 
\begin{enumerate}
\item $\rank_{\Lambda_\mu}H_1(X_{L_0};\Lambda_\mu)=\rank_{\Lambda_\mu} H_1(X_{L_1};\Lambda_\mu)$ and
\item $\Delta_{L_0}^Tf_0\overline{f_0}\overset{\cdot}=\Delta_{L_1}^Tf_1\overline{f_1}$ 
\end{enumerate}
for some $f_0,f_1\in \Lambda_{\mu S}-\{0\}$. Lemma \ref{lemma2.1kawauchi} is crucial to obtain the stronger conclusion that we can choose $f_0,f_1\in \Lambda_\mu$ such that $|f_0(1,\ldots,1)|=|f_1(1,\ldots,1)|=1$.
\end{remark0}
Finally, we mention what can be deduced from Theorems B and C for the special case of 2-component links with linking number 1. Note that by the work of Levine \cite{Levine:1982-1}, the Blanchfield form (without localization) $b_L\colon tH_1(X_L;\Lambda_2)\times tH_1(X_L;\Lambda_2)\to \K/\Lambda_2$ is non-singular.
\begin{corollaryD}Suppose $L$ is a 2-component link with linking number 1. If $X_L$ and $X_H$ are 1-solvable cobordant, then 
\begin{enumerate}
\item $[b_L]=0\in W(\K,\Lambda_2,-)$,
\item $\beta(L)=0$,
\item $\Delta_0(L)\overset{\cdot}=f\overline{f}$ 
for some $f\in \Lambda_2$ such that $|f(1,1)|=1$. 
\end{enumerate} 
In particular, the conclusion holds if $L$ and $H$ are height 3 Whitney tower/grope concordant.

\end{corollaryD}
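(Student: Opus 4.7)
The plan is to derive Corollary D by specializing Theorems B and C to the pair $(L_0, L_1) = (L, H)$, exploiting the triviality of the abelian invariants of the Hopf link. First I would record that $X_H$ deformation retracts onto $T^2$, whose universal free abelian cover is $\R^2$; hence $H_i(X_H; \Lambda_2) = 0$ for all $i \geq 1$, so $\rank_{\Lambda_2} H_1(X_H; \Lambda_2) = 0$, $\Delta_H^T \overset{\cdot}= 1$, and $b_H$ is the zero form on the trivial module. Moreover, by \cite[Theorem~2.13]{Cha:2012-1} a height $3$ Whitney tower/grope concordance between $L$ and $H$ yields a $1$-solvable cobordism between $X_L$ and $X_H$, so Theorems B and C are directly applicable.

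Part (2) is then immediate from Theorem C(1). For part (3), Theorem C(2) produces $f_0, f_1 \in \Lambda_2$ with $|f_i(1,1)| = 1$ and $\Delta_L^T f_0 \overline{f_0} \overset{\cdot}= f_1 \overline{f_1}$; evaluating at $(t_1, t_2) = (1,1)$ forces $|\Delta_L^T(1,1)| = 1$. Assuming part (1), Hillman's \cite[Theorem~3.27]{Hillman:2012-1} then yields $h \in \Lambda_2$ with $\Delta_L^T \overset{\cdot}= h \overline{h}$, and evaluating this at $(1,1)$ gives $|h(1,1)|^2 = 1$, so $f := h$ is the desired polynomial.

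The main obstacle is part (1): Theorem B only yields $[b_L] = 0$ in the \emph{localized} Witt group $W(\K, \Lambda_{2S}, -)$, whereas the corollary asserts vanishing in $W(\K, \Lambda_2, -)$. To bridge the gap I would invoke the remark preceding the corollary: by \cite{Levine:1982-1}, for $2$-component links with linking number~$1$ the non-localized Blanchfield form on $tH_1(X_L; \Lambda_2)$ is non-singular. Combined with part (2), this implies $H_1(X_L; \Lambda_2)$ is $\Lambda_2$-torsion and coincides with $tH_1(X_L; \Lambda_2)$, while non-singularity forces the maximal pseudonull submodule to vanish; the canonical map then identifies $(tH_1(X_L; \Lambda_2), b_L)$ isometrically with $(\hat{t}H_1(X_L; \Lambda_{2S}), b_L^{\mathrm{loc}})$, so the vanishing transfers to $W(\K, \Lambda_2, -)$. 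An alternative, which I view as cleaner, is to rerun the proof of Theorem~B verbatim with $\Lambda_2$-coefficients and metabolizer $P = \Ker(tH_1(\partial W; \Lambda_2) \to tH_1(W; \Lambda_2))$; the only step requiring care is that $P^{\perp}/P$ remains pseudonull and hence trivial in this non-singular setting, which follows from the same $\Ext$-vanishing argument used in the proof of Theorem~B.
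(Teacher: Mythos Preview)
Your proposal is correct and follows the paper's overall strategy of specializing Theorems~B and~C to the pair $(L,H)$ using the triviality of the abelian invariants of the Hopf link. There are two notable differences from the paper's proof.

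For part~(1), you explicitly address the discrepancy between Theorem~B's conclusion in the localized Witt group $W(\K,\Lambda_{2S},-)$ and the Corollary's claim in $W(\K,\Lambda_2,-)$, and you use Levine's non-singularity result to bridge the gap. The paper's own proof simply writes ``This shows (1) and (2)'' after noting $[b_H]=0$, leaving the passage between the two Witt groups to the remark preceding the Corollary; your treatment here is actually more careful than the paper's.

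For part~(3), the paper takes a shorter route. Rather than applying Theorem~C(2) as a black box, deducing $|\Delta_L^T(1,1)|=1$, and then separately invoking Hillman's Theorem~3.27 together with part~(1), the paper re-enters the \emph{proof} of Theorem~C with $L_0=H$, $L_1=L$. Since $\Delta_0(H)=1$, the factor $u_0$ in that proof is trivially~$1$, and the final displayed identity $\Delta_{L_0}^T f\overline{f}\overset{\cdot}=\Delta_{L_1}^T u_0\overline{u_0}$ collapses immediately to $\Delta_0(L)\overset{\cdot}=f\overline{f}$ with $|f(1,1)|=1$. Your argument is valid but more circuitous and makes part~(3) depend on having already established part~(1); the paper's extraction from inside the proof of Theorem~C avoids that dependence and is more direct.
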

\begin{proof}Let $L$ be a 2-component link with linking number 1. Assume that $X_L$ and $X_H$ are 1-solvable cobordant. Since $X_H=S^1\times S^1\times I$ and the $\Z\oplus\Z$ cover of $X_H$ is $\R\times \R\times I$, $[b_H]=0,\beta(H)=0$ and $\Delta_0(H)=1$. This shows (1) and (2). With the notation in the proof of Theorem C (applied to $L_0=H$ and $L_1=L$), $u_0=1$ and 
\[ \Delta_0(H)f\overline{f}\overset{\cdot}=\Delta_0(L)u_0\overline{u_0}\]
for some $f\in \Lambda_2$ such that $|f(1,1)|=1$.
Since $\Delta_0(H)=1$ and $u_0=1$, $\Delta_0(L)=f\overline{f}$ for some $f\in \Lambda_2$ such that $|f(1,1)|=1$. This completes the proof of (3).
\end{proof} 

\bibliographystyle{amsalpha}
\renewcommand{\MR}[1]{}
\bibliography{research}
\end{document}